\let\oldproofname=\proofname
\renewcommand{\proofname}{\rm\bf{\oldproofname}}
\newtheorem{lemma}{Lemma}[section]
\newtheorem{proposition}[lemma]{Proposition}
\newtheorem{theorem}[lemma]{Theorem}
\newtheorem{corollary}[lemma]{Corollary}
\theoremstyle{definition}
\newtheorem{remark}[lemma]{Remark}
\newtheorem{definition}[lemma]{Definition}
\newtheorem{example}[lemma]{Example}
\newcommand{\Z}{\mathbb{Z}}
\newcommand{\C}{\mathbb{C}}
\newcommand{\Hom}{{H}}
\newcommand{\Id}{  I}
\begin{document}

\tikzset{->-/.style={decoration={
  markings,
  mark=at position .7 with {\arrow{>}}},postaction={decorate}}}
  
  \tikzset{->>-/.style={decoration={
  markings,
  mark=at position .5 with {\arrow{>}}},postaction={decorate}}}
  
  \tikzset{-<-/.style={decoration={
  markings,
  mark=at position .3 with {\arrow{<}}},postaction={decorate}}}
  
\usetikzlibrary{angles, quotes}

\title{Dimers, networks, and cluster integrable systems}
\author{Anton Izosimov}
\date{}

\maketitle

\abstract{We prove that the class of cluster integrable systems constructed by Goncharov and Kenyon out of the dimer model on a torus coincides with the one defined by Gekhtman, Shapiro, Tabachnikov, and Vainshtein using Postnikov's perfect networks. To that end we express the characteristic polynomial of a perfect network's  boundary measurement matrix  in terms of the dimer partition function of the associated bipartite graph. Our main tool is flat geometry. Namely, we show that if a perfect network is drawn on a flat torus in such a way that the edges of the network are Euclidian geodesics, then the angles between the edges endow the associated bipartite graph with a canonical fractional Kasteleyn orientation. That orientation is then used to relate the partition function to boundary measurements.}


\section{Introduction}

\paragraph{Background.}
This paper deals with two \textit{a priori} different constructions of integrable systems related to cluster algebras. One is due to Goncharov and Kenyon \cite{GK} and is based on the dimer model on a bipartite graph on a torus. The other one is due to Gekhtman, Shapiro, Tabachnikov, and Vainshtein \cite{GSTV} and uses Postnikov's perfect networks. Our main result is that these two constructions produce exactly the same class of integrable systems.
\par
Cluster algebras were introduced by Fomin and Zelevinskiy \cite{FZ}. Gekhtman, Shapiro, and Vainshtein \cite{GSV} defined a family of Poisson structures compatible with the cluster structure. Fock and Goncharov \cite{FG} showed that every $\mathcal X$-type (also known as  $Y$-type) cluster variety has a canonical Poisson structure. Furthermore, in~\cite{GK, GSTV} this Poisson structure was promoted to a completely integrable system. Loosely speaking, both works show that a cluster structure gives rise to an integrable system provided that the corresponding quiver can be drawn on a torus.
The corresponding commuting Hamiltonians are defined using objects which are in a certain sense dual to the quiver. Namely, in~\cite{GK} the dual object is a bipartite graph, while in \,\cite{GSTV} it is a perfect network. Goncharov and Kenyon \cite{GK} conjectured that ``in the cases of interest ... the models are essentially equivalent.'' In the present paper we show that the two models are in fact equivalent {in {all} cases}. This generalizes a series of examples known to fit in both constructions, with the best known example being Schwartz's pentagram map~\cite{Sch}. The cluster structure of the pentagram map was found in \cite{glick2011pentagram}. Its connection with networks is described in  \,\cite{GSTV}. A dimer model interpretation is found in~\cite{FM, AGPR}. 




Fock and Marshakov \cite{FM} also showed that the class of Goncharov-Kenyon systems coincides with the one given by central functions on the loop group of $\mathrm{GL}_n$. As a corollary, we have three equivalent descriptions of the same class of integrable systems: using the dimer model, using networks, and using Poisson-Lie groups. To get from the network description to Poisson-Lie description, one can also use the Poisson property of the boundary measurement map established in \cite{GSV3}. 



\paragraph{The Goncharov-Kenyon system.} To state our main result, we first briefly describe the two constructions. 
We start with the Goncharov-Kenyon system based on the dimer model \cite{GK}. A \textit{toric graph} $\Gamma$ is a graph embedded in a $2$-torus $T^2$ in such a way that its \textit{faces}, i.e. the connected components of its complement $T^2 \setminus \Gamma$, are contractible. A graph is \textit{bipartite} if its vertices are colored black and white in such a way that each edge has one white vertex and one black vertex. In the present paper we only consider bipartite graphs that are \textit{leafless}, i.e. have no univalent vertices.  A \textit{dimer cover} of a bipartite graph (or a \textit{perfect matching}) is a set of edges with the property that every vertex is adjacent to a unique edge of the cover. A \textit{weighted graph} is a graph with numbers assigned to edges (in the present paper the weights are assumed to be complex). Given a dimer cover of a weighted bipartite graph $\Gamma$, its weight is defined as the product of weights of its edges. The sum of weights of all dimer covers of $\Gamma$ is called the \textit{dimer partition function}. It can be computed as the determinant of the so-called \textit{Kasteleyn matrix} \cite{Kas}. \par 
The Goncharov-Kenyon Hamiltonians are basically given by the partition function, modified to take into account the topology of each dimer cover. Namely, assume we are given a toric weighted bipartite graph $\Gamma$. Then, since the edges of $\Gamma$ can be canonically oriented from white to black, its any dimer cover may be viewed as an integral $1$-chain. Furthermore, all such chains have the same boundary, namely the sum of black vertices minus the sum of white vertices. In other words, the difference of two dimer covers is a cycle. Therefore, one can speak about the homology class of a dimer cover. The \textit{Goncharov-Kenyon Hamiltonian} $H_\xi$ corresponding to a class $\xi \in \Hom_1(T^2, \Z)$ is defined as the sum of weights of all dimer covers in the class $\xi$. These Hamiltonians are considered as functions on the space of edge weights up to \textit{gauge transformations}. A gauge transformation is multiplication of weights of all edges adjacent to a given vertex by a given number. Viewing the space of edge weights as the space of $1$-cochains, one identifies its quotient by gauge transformations with the cohomology group $\Hom^1(\Gamma, \C^*)$. Since a gauge transformation multiplies all Goncharov-Kenyon Hamiltonians $H_\xi$ by the same number, the Hamiltonians are well-defined as functions on $\Hom^1(\Gamma, \C^*)$, up to a common factor.  Furthermore, it is shown in \cite{GK} that the space  $\Hom^1(\Gamma, \C^*)$ has a natural Poisson structure such that after a suitable normalization the Hamiltonians $H_\xi$ Poisson-commute. Moreover, if $\Gamma$ satisfies a certain \textit{minimality} condition, then the Hamiltonians $H_\xi$ define a completely integrable system. In what follows, we will not care about completeness and use the term \textit{integrable system} to refer to any collection of Poisson-commuting functions.

\par

Just like the partition function, Goncharov-Kenyon Hamiltonians can be computed from a certain determinant. Namely, as shown in \cite{KOS} (see also Section \ref{sec:GK} below), one can introduce a parameter-dependent version  of the Kasteleyn matrix in such a way that its determinant reads
$$
K(\lambda, \mu)=     \sum\nolimits_{(i,j)} \pm H_{(i,j)}\lambda^i \mu^j
$$
where the summation is over all classes $(i,j) \in \Hom_1(T^2, \Z)$ which contain dimer covers, and $H_{(i,j)}$ is the sum of weights of all dimer covers in the class $(i,j)$. The sign in front of $H_{i,j}$ depends only on the parity of $i$ and $j$. Signs corresponding to three of the four possible parities are positive, while the fourth sign is negative. A particular combination of signs depends on the choice of a \textit{discrete spin structure}. Polynomials $K(\lambda, \mu)$ corresponding to each of the four different spin structures can be obtained from each other by means of a substitution of the form $K(\lambda, \mu) \mapsto K(\pm \lambda, \pm \mu)$. The polynomial $K(\lambda, \mu)$ is called the \textit{characteristic polynomial} of the toric bipartite graph~$\Gamma$. Up to signs, the characteristic polynomial is the generating function of Goncharov-Kenyon Hamiltonians. It is well-defined up to a monomial factor. 
\paragraph{The Gekhtman-Shapiro-Tabachnikov-Vainshtein system.}
We now describe the construction of Gekhtman, Shapiro, Tabachnikov and Vainshtein \cite{GSTV}. It is based on the notion of a perfect network, introduced in the case of a disk by Postnikov \cite{Pos}. A \textit{perfect network} is a weighted directed graph whose vertices are of two type: white and black. White vertices have exactly one incoming edge, while black vertices have exactly one outgoing edge.  As with bipartite graphs, we only consider leafless networks. A \textit{toric perfect network} is a perfect network which is at the same time a toric graph. Given such a network, an \textit{ideal rim} is a simple loop on the torus disjoint from the set of vertices and intersecting the edges in such a way that all intersections have the same sign. Cutting the torus along a rim, one obtains a network on a cylinder (when an edge is cut into two, the weights of the newly formed edges are defined in such a way that their product is equal to the weight of the initial edge). It is no longer perfect in the above sense, because in addition to black and white vertices it has uncolored ones. Each uncolored vertex is either a source or a sink and is located at the boundary of the cylinder. Moreover, all sources are located at one boundary component, while all sinks are at the other one. Such a network is called \textit{a perfect network on a cylinder.} Given such a network, one defines its \textit{boundary measurement matrix} as follows: 
\begin{equation*}
M_{ij}(\lambda) := \sum\nolimits_\gamma (-1)^{c(\gamma)}\lambda^{\mathrm{ind}(\gamma)}\mathrm{wt}(\gamma).
\end{equation*}
Here the sum is taken over all directed paths $\gamma$ from source $i$ to sink $j$. The \textit{weight $\mathrm{wt}(\gamma)$} of $\gamma$ is the product of weights of all edges along $\gamma$. The number $c(\gamma) \in \Z$ is called the \textit{concordance number} of $\gamma$ and is basically the self-intersection index mod $2$ (see Section \ref{sec:pmr}). The number $\mathrm{ind}(\gamma) \in \Z$ is called the index of $\gamma$ and is, roughly speaking, the number of times $\gamma$ goes around the cylinder. More precisely, given a toric network, one considers a simple cycle which has a unique intersection with the rim. Such a cycle is called a \textit{cut}. The image of the cut in the corresponding cylindric network is a path connecting the two boundary components. We orient the cut in such a way that it starts at the component containing the sources and ends at the component containing the sinks. The index of a directed path is then defined as its intersection number with the cut.

In the presence of directed cycles the entries of the boundary measurement matrix may be infinite series. However, they can be always rewritten as rational functions. This is proved in \cite{Pos} in the disk case and \cite{GSV3} in the cylinder case. \par
The Gekhtman-Shapiro-Tabachnikov-Vainshtein Hamiltonians are, basically, coefficients of the characteristic polynomial of the boundary measurement matrix. More precisely, since the characteristic polynomial is a rational function of $\lambda$, the Hamiltonians are defined as the coefficients of its numerator.
%
%
%
  It is shown in \cite{GSTV} that they commute with respect to a Poisson structure on the space of edge weights defined in \cite{GSV2, GSV3}. Furthermore, the Hamiltonians and the Poisson structure are invariant under gauge transformations, so both descend to the quotient space, which is again the cohomology group of the toric network with coefficients in $\C^*$ (also known as the space of \textit{face and trail weights}). Our main result is that the so obtained integrable system on the cohomology coincides with the Goncharov-Kenyon system. The coincidence of Poisson structures was already noted in \cite{GK}. Here we will prove that the Hamiltonians coincide too.\par
\paragraph{From networks to bipartite graphs.}
 \begin{figure}[t]
 \centering
\begin{tikzpicture}[, scale = 1]
\node [draw,circle,color=black, fill=black,inner sep=0pt,minimum size=5pt] (A) at (0,0) {};
\node [draw,circle,color=black, fill=white,inner sep=0pt,minimum size=5pt] (B) at (1,0) {};
\node [draw,circle,color=black, fill=black,inner sep=0pt,minimum size=5pt] (C) at (2,0) {};
\node [draw,circle,color=black, fill=black,inner sep=0pt,minimum size=5pt] (D) at (6,0) {};
\draw (A) -- (B) -- (C);
\draw (A) -- +(-0.5,+0.5);
\draw (A) -- +(-0.5,-0.5);
\draw (C) -- +(+0.5,+0.5);
\draw (C) -- +(+0.5,-0.5);
\node () at (4,0) {$\longleftrightarrow$};
\draw (D) -- +(-0.5,+0.5);
\draw (D) -- +(-0.5,-0.5);
\draw (D) -- +(+0.5,+0.5);
\draw (D) -- +(+0.5,-0.5);

\end{tikzpicture}
\caption{A transformation of bipartite graphs yielding equivalent graphs.}\label{equiv}
\end{figure}
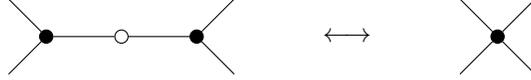
To give a precise statement of our result, we need to explain how to pass from a perfect network to a bipartite graph. We say that two toric bipartite graphs are \textit{equivalent} if they are related by a sequence of $2$-valent vertex removals or additions, see Figure \ref{equiv} (there is an analogous transformation with opposite vertex colors). Clearly, the cohomology groups of equivalent graphs are canonically isomorphic. Moreover, as shown in~\cite{GK}, this isomorphism identifies the corresponding Goncharov-Kenyon integrable systems. \par
Similarly, we say that two perfect networks are {equivalent} if they are related by a sequence of the following trasformations or their inverses:
(a) Insertion of a $2$-valent vertex of any color in the middle of any edge. 
(b)~Contraction of a unicolored edge which is not a loop.
(c) Reversal of an oriented cycle.
As with bipartite graphs, cohomology groups of equivalent networks are isomorphic. Furthermore, we will see below that integrable systems associated with, in some sense, generic networks from a given equivalence class are also isomorphic to each other.

Now, define a map from the set of perfect toric networks to the set of bipartite toric graphs as follows: given a perfect network, insert an opposite color vertex in the middle of every unicolored edge, and forget the orientations. Note that a bipartite graph obtained in this way comes equipped with a dimer covering. The latter is defined by the edges that were oriented from black to white before we forgot the orientations. So we actually have a map from perfect networks to bipartite graphs admitting a dimer covering. It is easy to see that this map is a bijection at the level of the above defined equivalence classes. Also note that if $\mathcal N$  is a perfect network, and $\Gamma$ is the associated bipartite graph, then we have a natural isomorphism $\Hom^1(\mathcal N, \C^*)\simeq\Hom^1(\Gamma, \C^*)$.

\paragraph{The main result.} Given a perfect toric network, one can define its boundary measurement matrix, provided the network admits an ideal rim. We will call perfect networks which have an ideal rim \textit{admissible}. As follows from~\cite[Proposition 4.2]{GSTV}, there is an admissible perfect network in every equivalence class. 


\begin{theorem}\label{thm1}
Consider an admissible perfect toric network $\mathcal N$ with a chosen ideal rim and cut, and let $M(\lambda)$ be its boundary measurement matrix. Let also $\Gamma$ be the toric bipartite graph corresponding to $\mathcal N$, and let $\Psi \colon \Hom^1(\mathcal N, \C^*)\to\Hom^1(\Gamma, \C^*)$ be the natural isomorphism of cohomology groups. 
Then, for an appropriate choice of a spin structure, one has
\begin{equation*}
 \det(\Id - \mu M(\lambda)) = \Psi^*\left( \frac{K(\lambda, \mu)}{K(\lambda, 0)} \right),
\end{equation*}
where $K(\lambda, \mu)$ is the characteristic polynomial of $\Gamma$ written in the homology basis given by the rim and cut of $\mathcal N$ and normalized in such a way that it is a polynomial in $\mu$ not divisible by $\mu$. 
\end{theorem}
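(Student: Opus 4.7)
The plan is to expand both sides as signed sums over combinatorial objects on $\mathcal N$ (respectively $\Gamma$) and identify them term by term. On the left hand side, I would use
\begin{equation*}
\det(\Id - \mu M(\lambda)) = \sum_{k \geq 0} (-\mu)^k \sum_{|S| = k} \det\bigl(M(\lambda)_{S,S}\bigr),
\end{equation*}
and apply a Lindstr\"om--Gessel--Viennot type argument on the cylinder obtained by cutting $\mathcal N$ along the rim: each principal minor expands as a signed sum over vertex-disjoint families of directed paths taking a $k$-subset of sources to the same $k$-subset of sinks, weighted by $\mathrm{wt}(\gamma)$, by $\lambda^{\mathrm{ind}(\gamma)}$, and by products of $(-1)^{c(\gamma)}$ together with the permutation sign from matching sources with sinks. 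After all intersections are resolved by the usual path-swapping argument, what survives is a sum over non-crossing path-and-cycle systems on the torus $\mathcal N$.

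For the right hand side, I would use that $K(\lambda, \mu)$ is the determinant of the parameter-dependent Kasteleyn matrix of \cite{KOS}, hence a signed sum over dimer covers of $\Gamma$ weighted by $\lambda^i \mu^j$ according to homology class. Since every dimer cover of $\Gamma$ differs from the reference cover (the one coming from the white-to-black orientation of $\mathcal N$, which is exactly what the map $\mathcal N\to\Gamma$ produces) by a symmetric difference consisting of disjoint cycles, dividing by $K(\lambda,0)$ normalizes against the reference term and rewrites $K(\lambda,\mu)/K(\lambda,0)$ as a signed sum over such cycle configurations, graded by $\mu^j$ with $j$ the number of cycles wrapping the torus in the rim direction.

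The central bijective step is now visible: under the map $\mathcal N \mapsto \Gamma$, a vertex-disjoint family of directed paths and cycles in $\mathcal N$ is precisely the same data as a cycle configuration in $\Gamma$ (viewed as symmetric difference with the reference matching), with source-to-sink paths on the cylinder lifting to cycles wrapping the $\mu$-direction on the torus, and the network index matching the $\lambda$-exponent. The weights $\mathrm{wt}(\gamma)$ and $\Psi^*$ of the corresponding Laurent monomial agree by construction, because $\Psi$ is exactly the isomorphism of cohomology induced by the network-to-bipartite-graph map. Thus the equality reduces to an equality of signs.

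The hard part, and what the rest of the paper must supply, is matching the signs. The Kasteleyn sign of a cycle configuration is determined by a Kasteleyn orientation of $\Gamma$; the network sign of a family of paths and cycles is $\prod_\gamma (-1)^{c(\gamma)}$ times the LGV permutation sign. To identify the two, I would realize $\mathcal N$ geodesically on a flat torus and use the angles between edges to define a \emph{fractional} Kasteleyn weight on each edge of $\Gamma$, as foreshadowed in the abstract. Two verifications are then needed: first, that this fractional weighting is genuinely Kasteleyn, so it computes $K(\lambda,\mu)$ up to a global factor depending only on the spin structure; second, that the total fractional sign produced by a closed loop in $\Gamma$ equals the product of $(-1)^{c(\gamma)}$ and the permutation sign attached to the corresponding path-cycle system in $\mathcal N$. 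Both should boil down to a local angle accounting at vertices and crossings combined with a Gauss--Bonnet-type computation of total turning along each component, the latter explaining how the index power of $\lambda$ enters with the correct sign. This sign-matching step is where I expect the main difficulty, and where the choice of spin structure in the statement will be pinned down.
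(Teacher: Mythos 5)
Your proposal shares the paper's central innovation---realizing the network geodesically on a flat torus and using the turning numbers of white-to-black edges to build a fractional Kasteleyn marking---and it correctly locates the crux in the two sign verifications you list at the end: that the angle weighting satisfies the Kasteleyn condition, and that the total turning along a source-to-sink path or a closed loop reproduces the concordance sign. These are exactly Propositions \ref{prop:Kast} and \ref{bmpm} of the paper, and both are indeed proved by the Gauss--Bonnet-type exterior-angle count you anticipate (with a case distinction at each vertex according to whether it is a ``switch''). Where you genuinely diverge is in how the two determinants are compared. You propose expanding $\det(\Id-\mu M(\lambda))$ over principal minors via a Lindstr\"om--Gessel--Viennot argument on the cylinder, expanding $K(\lambda,\mu)/K(\lambda,0)$ over symmetric differences with the reference matching, and matching terms bijectively. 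The paper instead proves a purely linear-algebraic identity (Proposition \ref{prop:initfla}): writing the boundary path matrix as $B=X\hat WY$ with $\hat W=(\Id-\hat A)^{-1}$ and invoking $\det(\Id+PQ)=\det(\Id+QP)$, one gets $\det(\Id-\mu B)=\det(\Id-\bar A(\mu))/\det(\Id-\bar A(0))$ outright, and for a bipartite network the numerator collapses to $\det(\Id-\mathcal A_{bw}\mathcal A_{wb})$, which becomes the magnetically altered Kasteleyn determinant once the turning numbers are installed as edge markings. This sidesteps every combinatorial subtlety in your route: on a network with directed cycles the ``usual path-swapping argument'' does not apply verbatim, since paths may self-intersect and wind arbitrarily; one needs loop-erasure or the machinery of \cite{talaska2012determinants}, and the surviving terms carry extra $(-1)^{\#\mathrm{cycles}}$ factors that must then be reconciled with both the LGV permutation sign and the concordance signs. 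Your route can in principle be pushed through and would yield a more explicit term-by-term combinatorial identity, but at the cost of substantially heavier sign bookkeeping that the resolvent identity renders invisible. One further point your sketch glosses over is the preliminary normalization (by inserting $2$-valent vertices) to a bipartite network in which all rim-crossing edges are parallel and oriented black-to-white and all vertices adjacent to sources are white and to sinks black; this is what makes closing up a source-to-sink path contribute exactly a factor $-1$ and is how the spin structure in the statement gets pinned down.
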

It follows that while the Goncharov-Kenyon system is given by the coefficients of $K(\lambda, \mu)$, the Gekhtman-Shapiro-Tabachnikov-Vainshtein system is given by $K(\lambda, \mu) / Q(\lambda)$, where $Q(\lambda)$ is the greatest common divisor of $K(\lambda, \mu)$ and $K(\lambda, 0)$. In particular, if the two latter polynomials are coprime (which is equivalent to saying that $K(\lambda, \mu)$ is not divisible by a non-trivial polynomial of $\lambda$), then the Gekhtman-Shapiro-Tabachnikov-Vainshtein system coincides with the Goncharov-Kenyon system. Since $K(\lambda, \mu)$ has only finitely many irreducible factors, this can be always arranged by adjusting the rim direction. This is always possible, because as follows from \cite[Proposition 4.2]{GSTV}, any homology class on a torus can be taken as the rim direction for a suitable network from a given equivalence class. Therefore, we obtain the following:

\begin{corollary}
Given a  perfect toric network $\mathcal N$, one can choose an equivalent admissible network $\mathcal N'$ so that for a certain rim direction the Gekhtman-Shapiro-Tabachnikov-Vainshtein system associated with $\mathcal N'$ coincides with the Goncharov-Kenyon system associated with the bipartite graph $\Gamma$ corresponding to $\mathcal N$.\par
Conversely, given a toric bipartite graph $\Gamma$, one can find an orientation turning it into an admissible perfect network $\mathcal N$, so that for a certain rim direction the Gekhtman-Shapiro-Tabachnikov-Vainshtein system associated with $\mathcal N$ coincides with the Goncharov-Kenyon system associated with $\Gamma$.
\end{corollary}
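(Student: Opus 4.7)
The plan is to unpack Theorem~\ref{thm1} and then carry out a genericity argument on the rim direction. By Theorem~\ref{thm1}, the characteristic polynomial of the boundary measurement matrix of an admissible perfect toric network $\mathcal N$ equals, up to $\Psi^*$, the ratio $K(\lambda,\mu)/K(\lambda,0)$, where $K$ is the characteristic polynomial of the associated bipartite graph $\Gamma$ written in the homology basis given by the rim and cut. The GSTV Hamiltonians are, by definition, the coefficients in $\mu$ of the numerator of this rational function in lowest terms, that is of $K(\lambda,\mu)/Q(\lambda)$ with $Q(\lambda):=\gcd\bigl(K(\lambda,\mu),K(\lambda,0)\bigr)$, the GCD being taken in $\C[\lambda^{\pm 1}][\mu]$. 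Consequently the GSTV system agrees with the GK system (through $\Psi^*$) exactly when $Q(\lambda)$ is a unit, i.e.\ when $K(\lambda,\mu)$ has no non-trivial factor depending only on~$\lambda$.

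Next I would show that the equivalence class of any given $\mathcal N$ contains an admissible representative $\mathcal N'$ with a rim direction for which this coprimality holds. Two inputs combine. First, $K(\lambda,\mu)$ is a Laurent polynomial in two variables and therefore has only finitely many irreducible factors; in a fixed homology basis on $T^2$, such a factor depends only on $\lambda$ iff its Newton polygon is parallel to the $\lambda$-axis, and a change of basis by $\mathrm{SL}_2(\Z)$ rotates Newton polygons accordingly. Since only finitely many primitive directions are thereby ``forbidden'' (at most one per non-monomial irreducible factor of $K$), a generic choice of rim direction is safe. Second, by \cite[Proposition 4.2]{GSTV} any primitive class in $\Hom_1(T^2,\Z)$ is realizable as the rim direction of some admissible network equivalent to $\mathcal N$, so the genericity statement upgrades from a statement about bases to one about actual admissible networks. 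This gives the first half of the corollary.

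For the converse, I would invoke the bijection between equivalence classes of perfect networks and equivalence classes of bipartite graphs admitting a dimer cover recalled in the paragraph ``From networks to bipartite graphs.'' Given a toric bipartite graph $\Gamma$, pick a dimer cover and orient the edges accordingly to produce a perfect network in the corresponding equivalence class; then rerun the previous argument on that class to extract an admissible representative $\mathcal N$ whose rim direction makes $Q(\lambda)$ trivial. Theorem~\ref{thm1} then identifies its GSTV system with the GK system of $\Gamma$ via $\Psi$.

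The step I expect to require the most care is the interface between the informal notion of ``genericity'' and the discrete reality that rim directions are primitive integer homology classes and must be physically realized by some admissible network in the prescribed equivalence class. Both pieces are in place: the finiteness of the set of irreducible factors of $K$ reduces genericity to avoiding finitely many directions in $\Z^2\setminus\{0\}$, while \cite[Proposition 4.2]{GSTV} supplies the realization of any desired primitive direction. Everything else is a direct reading of Theorem~\ref{thm1}.
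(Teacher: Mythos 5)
Your proposal is correct and follows essentially the same route as the paper: read off from Theorem~\ref{thm1} that the two systems agree exactly when $K(\lambda,\mu)$ has no non-trivial factor in $\lambda$ alone, use the finiteness of the irreducible factors of $K$ to see that only finitely many rim directions are forbidden, and invoke \cite[Proposition 4.2]{GSTV} to realize a safe direction by an admissible equivalent network, with the converse handled via the bijection between equivalence classes of networks and of bipartite graphs admitting a dimer cover. Your added detail about Newton polygons and $\mathrm{SL}_2(\Z)$ changes of basis is a correct elaboration of the paper's terser ``adjusting the rim direction'' step.
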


Theorem \ref{thm1} assumes a particularly simple form when the network $\mathcal N$ is itself bipartite. In that case, $\Gamma$ is basically the same graph as $\mathcal N$, with the only difference that while $\mathcal N$ has a non-trivial orientation, all edges of~$\Gamma$ are thought of as oriented from white to black. As a result, the map $\Psi \colon \Hom^1(\mathcal N, \C^*)\to\Hom^1(\Gamma, \C^*)$ boils down to the following map between edge weight spaces: replace all weights of black-to-white edges by their reciprocals while keeping all weights of white-to-black edges intact, see Example \ref{exmain} below.


\begin{example}\label{exmain}
 \begin{figure}[t]
 \centering
\begin{tikzpicture}[, scale = 0.9]
\node [draw,circle,color=black, fill=black,inner sep=0pt,minimum size=5pt] (B) at (0,1) {};
\node [draw,circle,color=black, fill=white,inner sep=0pt,minimum size=5pt] (C) at (0,2) {};
\node [draw,circle,color=black, fill=white,inner sep=0pt,minimum size=5pt] (B1) at (1,1) {};
\node [draw,circle,color=black, fill=black,inner sep=0pt,minimum size=5pt] (C1) at (1,2) {};

\draw [->-] (B) -- (B1) node[midway, above] {$e_4$};
\draw [->-] (C) -- (C1)  node[midway, above] {$e_3$};
\draw [->-] (C) -- (B)  node[midway, left] {$e_2$};
\draw [->-] (C) -- +(0,1)  node[midway, left] {$e_1$};
\draw [-<-] (B) -- +(0,-1);
\draw [-<-] (B) -- +(-1,0)  ;
\draw [-<-] (C) -- +(-1,0) ;
\draw [->-] (B1) -- +(1,0) node[midway, above] {$e_6$};  ;
\draw [->-] (C1) -- +(1,0)  node[midway, above] {$e_5$} ;
\draw [dashed] (-1,3) -- (2,3) -- (2,0);
\draw [->>-, dashed] (-1,0) -- (2,0)  node[midway, below] {cut};;
\draw [->>-, dashed] (-1,0) -- (-1,3)  node[midway, left] {rim};;
\end{tikzpicture}
\caption{A perfect toric network.}\label{ex1}
\end{figure}
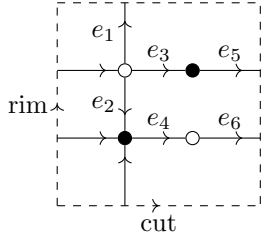
 \begin{figure}[b]
 \centering
 \begin{tikzpicture}[, scale = 0.9]
\node [draw,circle,color=black, fill=black,inner sep=0pt,minimum size=5pt] (B) at (0,1) {};
\node [draw,circle,color=black, fill=white,inner sep=0pt,minimum size=5pt] (C) at (0,2) {};
\node [draw,circle,color=black, fill=white,inner sep=0pt,minimum size=5pt] (B1) at (1,1) {};
\node [draw,circle,color=black, fill=black,inner sep=0pt,minimum size=5pt] (C1) at (1,2) {};

\draw [line width = 2.5pt]  (B) -- (B1) node[midway, above] {$e_4$};
\draw(C) -- (C1)  node[midway, above] {$e_3$};
\draw  (C) -- (B)  node[midway, left] {$e_2$};
\draw (C) -- +(0,1)  node[midway, left] {$e_1$};
\draw  (B) -- +(0,-1);
\draw (B) -- +(-1,0) ;
\draw  [line width = 2.5pt]  (C) -- +(-1,0) ;
\draw  (B1) -- +(1,0)   node[midway, above] {$e_6$};
\draw  [line width = 2.5pt]   (C1) -- +(1,0)  node[midway, above] {$e_5$};
\draw [dashed] (-1,3) -- (2,3) -- (2,0);
\draw [->>-, dashed] (-1,0) -- (2,0)  node[midway, below] {$\gamma_2$};;
\draw [->>-, dashed] (-1,0) -- (-1,3)  node[midway, left] {$\gamma_1$};;
\node  () at (0.5,-0.7) {weight = $x_4x_5$};
\node  () at (0.5,-1.2) {class = $0$};
\end{tikzpicture} \quad
\begin{tikzpicture}[, scale = 0.9]
\node  () at (0.5,-0.7) {weight = $x_3x_4$};
\node  () at (0.5,-1.2) {class = $[\gamma_2]$};
\node [draw,circle,color=black, fill=black,inner sep=0pt,minimum size=5pt] (B) at (0,1) {};
\node [draw,circle,color=black, fill=white,inner sep=0pt,minimum size=5pt] (C) at (0,2) {};
\node [draw,circle,color=black, fill=white,inner sep=0pt,minimum size=5pt] (B1) at (1,1) {};
\node [draw,circle,color=black, fill=black,inner sep=0pt,minimum size=5pt] (C1) at (1,2) {};

\draw [line width = 2.5pt]  (B) -- (B1) node[midway, above] {$e_4$};
\draw [line width = 2.5pt] (C) -- (C1)  node[midway, above] {$e_3$};
\draw  (C) -- (B)  node[midway, left] {$e_2$};
\draw (C) -- +(0,1)  node[midway, left] {$e_1$};
\draw  (B) -- +(0,-1);
\draw (B) -- +(-1,0) ;
\draw (C) -- +(-1,0)  ;
\draw  (B1) -- +(1,0)  node[midway, above] {$e_6$} ;
\draw  (C1) -- +(1,0) node[midway, above] {$e_5$};
\draw [dashed] (-1,3) -- (2,3) -- (2,0);
\draw [->>-, dashed] (-1,0) -- (2,0)  node[midway, below] {$\gamma_2$};;
\draw [->>-, dashed] (-1,0) -- (-1,3)  node[midway, left] {$\gamma_1$};;
\end{tikzpicture} \quad
\begin{tikzpicture}[, scale = 0.9]
\node  () at (0.5,-0.7) {weight = $x_5x_6$};
\node  () at (0.5,-1.2) {class = $[\gamma_2]$};
\node [draw,circle,color=black, fill=black,inner sep=0pt,minimum size=5pt] (B) at (0,1) {};
\node [draw,circle,color=black, fill=white,inner sep=0pt,minimum size=5pt] (C) at (0,2) {};
\node [draw,circle,color=black, fill=white,inner sep=0pt,minimum size=5pt] (B1) at (1,1) {};
\node [draw,circle,color=black, fill=black,inner sep=0pt,minimum size=5pt] (C1) at (1,2) {};

\draw (B) -- (B1) node[midway, above] {$e_4$};
\draw (C) -- (C1)  node[midway, above] {$e_3$};
\draw  (C) -- (B)  node[midway, left] {$e_2$};
\draw (C) -- +(0,1)  node[midway, left] {$e_1$};
\draw  (B) -- +(0,-1);
\draw  [line width = 2.5pt]   (B) -- +(-1,0)  ;
\draw[line width = 2.5pt]  (C) -- +(-1,0)  ;
\draw  [line width = 2.5pt]   (B1) -- +(1,0)  node[midway, above] {$e_6$};
\draw [line width = 2.5pt]   (C1) -- +(1,0) node[midway, above] {$e_5$};
\draw [dashed] (-1,3) -- (2,3) -- (2,0);
\draw [->>-, dashed] (-1,0) -- (2,0)  node[midway, below] {$\gamma_2$};;
\draw [->>-, dashed] (-1,0) -- (-1,3)  node[midway, left] {$\gamma_1$};;
\end{tikzpicture}
\quad
\begin{tikzpicture}[, scale = 0.9]
\node  () at (0.5,-0.7) {weight = $x_3x_6$};
\node  () at (0.5,-1.2) {class = $2[\gamma_2]$};
\node [draw,circle,color=black, fill=black,inner sep=0pt,minimum size=5pt] (B) at (0,1) {};
\node [draw,circle,color=black, fill=white,inner sep=0pt,minimum size=5pt] (C) at (0,2) {};
\node [draw,circle,color=black, fill=white,inner sep=0pt,minimum size=5pt] (B1) at (1,1) {};
\node [draw,circle,color=black, fill=black,inner sep=0pt,minimum size=5pt] (C1) at (1,2) {};

\draw (B) -- (B1) node[midway, above] {$e_4$};
\draw [line width = 2.5pt] (C) -- (C1)  node[midway, above] {$e_3$};
\draw  (C) -- (B)  node[midway, left] {$e_2$};
\draw (C) -- +(0,1)  node[midway, left] {$e_1$};
\draw  (B) -- +(0,-1);
\draw  [line width = 2.5pt]   (B) -- +(-1,0)  ;
\draw (C) -- +(-1,0)  ;
\draw  [line width = 2.5pt]   (B1) -- +(1,0) node[midway, above] {$e_6$} ;
\draw  (C1) -- +(1,0) node[midway, above] {$e_5$};
\draw [dashed] (-1,3) -- (2,3) -- (2,0);
\draw [->>-, dashed] (-1,0) -- (2,0)  node[midway, below] {$\gamma_2$};;
\draw [->>-, dashed] (-1,0) -- (-1,3)  node[midway, left] {$\gamma_1$};;
\end{tikzpicture}
 
\caption{Dimer covers of the bipartite graph shown in Figure \ref{ex1}.}\label{ex11}
\end{figure}
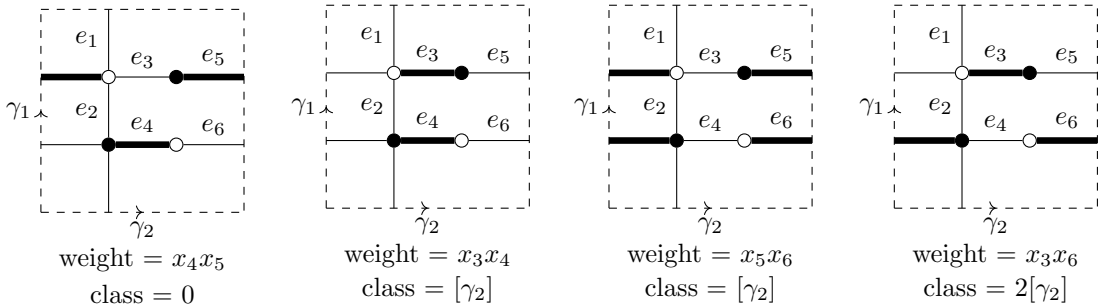

Figure \ref{ex1} shows a perfect toric network $\mathcal N$ (the opposite sides of the square are identified). Choose the rim and cut as shown in the figure and label the sources and sinks from top down. Denote the weight of each edge $e_i$ by $x_i$. Then the boundary measurement matrix is
$$
M(\lambda) = \left(\begin{array}{cc}x_3x_5 & x_2x_4x_6 + x_1x_4x_6 \lambda \\0 & x_4x_6\end{array}\right).
$$
Consider now the associated bipartite graph $\Gamma$. It is the same graph as $\mathcal N$ but without orientations. It has four dimer covers shown in Figure \ref{ex11}. Under each cover we provide its weight and homology class (the homology classes are defined up to shift). One can choose the spin structure so that the characteristic polynomial reads
$$
K(\lambda, \mu) = x_4x_5 - (x_3x_4 + x_5x_6)\mu + x_3x_6 \mu^2.
$$
Then
$$
\frac{K(\lambda, \mu)}{K(\lambda, 0)} = 1 - \left(\frac{x_3}{x_5} + \frac{x_6}{x_4}\right)\mu + \frac{x_3x_6}{x_4x_5}   \mu^2 = \left( 1 -  \frac{x_3}{x_5} \mu\right)\left( 1 -  \frac{x_6}{x_4} \mu\right).
$$
The natural isomorphism $\Psi \colon \Hom^1(\mathcal N, \C^*)\to\Hom^1(\Gamma, \C^*)$ is induced by the following map between edge weight spaces:
$$
(x_1, x_2, x_3, x_4, x_5, x_6) \mapsto (x_1, x_2, x_3, x_4^{-1}, x_5^{-1}, x_6),
$$
so
$$
\Psi^*\left( \frac{K(\lambda, \mu)}{K(\lambda, 0)} \right) =  (1 - x_3x_5 \mu )( 1 - x_4x_6\mu) = \det(\Id - \mu M(\lambda)).
$$

Note that in this example $K(\lambda, \mu)$ is not divisible by a non-trivial polynomial of $\lambda$, so the two integrable systems coincide. The corresponding Hamiltonians are given by the sum and product of the functions on the cohomology given by pairing with the cycles $e_3 + e_5$ and $e_4 + e_6$.
 This is, however, no longer the case if we switch the rim with the cut. In this new basis, the dimer characteristic polynomial is a polynomial of $\lambda$, so the characteristic polynomial of the boundary measurement matrix is trivial. And indeed, there is a single source and a single sink, with no directed paths going from the source to the sink. So the boundary measurement matrix vanishes and $ \det(\Id - \mu M(\lambda)) = 1$. In this case, the Goncharov-Kenyon system is still the same (as it is basis independent), while the Gekhtman-Shapiro-Tabachnikov-Vainshtein system is trivial. 

\end{example}

\paragraph{Outline of the proof.} The rest of the paper is devoted to the proof of Theorem \ref{thm1}. Our proof strategy is as follows. First, we know that boundary measurements do not change when we insert $2$-valent vertices. So, it is sufficient to prove the theorem for bipartite networks. Moreover, one can assume that after cutting the torus along the rim all vertices connected to sources are white, while all vertices connected to sinks are black. This again can be arranged by inserting additional vertices. Finally, inserting additional vertices if needed, one can assume that the network can be drawn on a \textit{flat} torus in such a way that all edges are straight lines, and moreover all edges intersecting the rim are parallel to each other. This flat structure is the main tool we use to establish the equivalence of two constructions. 
Namely, we show that if a bipartite perfect network is drawn on a flat torus, then the angles between its edges endow it with a canonical \textit{fractional Kasteleyn orientation}. This orientation, on one hand, determines the concordance numbers needed to calculate the boundary measurement matrix, and on the other hand allows one to compute the dimer characteristic polynomial. As a result, we obtain the stated relation between the two objects.


\paragraph{Acknowledgements.} The author is grateful to Michael Gekhtman and Pavlo Pylyavskyy for fruitful conversations and useful remarks. This work was supported by NSF grant DMS-2008021.
\medskip

\section{Determinantal expression for the characteristic polynomial of the dimer model}\label{sec:GK}
In this section we recall the determinantal description of the characteristic polynomial $K(\lambda, \mu)$ of the dimer model. Our approach is similar to that of \cite[Section 2.1]{KO} and \cite[Section 5.6]{FM}. 

\paragraph{Kasteleyn orientations and markings.} Consider a toric bipartite graph $\Gamma$. For notational convenience, assume that every face of $\Gamma$ is an embedded polygon (if not, then the closure of every face $f$ of $\Gamma$ can still be thought as the image of a polygon $ f'$ under a cellular map, and in the following definitions instead of counting vertices and edges of $f$ one needs to count vertices and edges of $ f'$).
\begin{definition}
An orientation of a toric bipartite graph is called a \textit{Kasteleyn orientation} if it satisfies one of the following equivalent conditions:
\begin{enumerate} \item For every face, the number of clockwise oriented boundary edges is odd (since every face of a bipartite graph has even number of vertices, this is equivalent to saying that the number of counter-clockwise oriented boundary edges is odd). 
\item
Every $4k$-gonal face has odd number of black-to-white oriented edges (and hence odd number of white-to-black oriented edges), while every $(4k + 2)$-gonal face has even number of black-to-white oriented edges  (and hence even number  of white-to-black oriented edges).
\end{enumerate}
\end{definition}
\begin{definition}
A \textit{Kasteleyn marking} on a toric bipartite graph is an assignment of $\pm 1$ to every edge such that for every face $f$ the product of signs over its edges is equal to $(-1)^{l(f)/2+1}$ where $l(f)$ is the number of vertices of $f$. 
\end{definition}
Kasteleyn markings are in one-to-one correspondence with Kasteleyn orientations. To get a Kasteleyn marking from a Kasteleyn orientation, one assigns $1$ to edges oriented from white to black, and $-1$ to edges oriented from black to white.
\paragraph{The Kasteleyn operator and characteristic polynomial.} Consider a weighted toric bipartite graph~$\Gamma$. Choose some Kasteleyn orientation (equivalently,  Kasteleyn marking) of $\Gamma$. 
Consider the lift $\tilde \Gamma$ of $\Gamma$ to the universal covering of the torus, and let $B$ (respectively, $W$) be the set of black (respectively, white) vertices of $\tilde \Gamma$. The \textit{Kasteleyn operator} $\mathcal K \colon \C^B \to \C^W$ between the corresponding function spaces is defined as follows. For a white vertex $w \in W$, let $e_1, \dots, e_m$ be the edges of $\tilde \Gamma$ incident to $w$, and let $b_1, \dots, b_m$ be their other endpoints. For an edge $e$ of $\tilde \Gamma$, denote by $\mathrm{wt}(e)$ the weight of $e$, and by $k(e)$ the Kasteleyn marking of $e$ (the weights and markings for the covering $\tilde \Gamma$ are defined by pulling back the corresponding objects from $\Gamma$). Then, for a function $g \colon B \to \C$, one sets
$$
(\mathcal K(g))(w) := \sum\nolimits_{j=1}^m k(e_j) \mathrm{wt}(e_j) g(b_j).
$$
Let $\gamma_1, \gamma_2$ be simple oriented cycles on the torus intersecting at one point and disjoint from the vertices of~$\Gamma$. Their homology classes form a basis in $\Hom_1(T^2, \Z)$ and also give rise to a basis $T_1, T_2$ of the group of deck transformations for the universal covering of the torus. Let
$
\C^B_{\lambda, \mu}  := \{g \in \C^B \mid T_1^*g = \lambda g; T_2^*g = \mu g\}
$
be the space of quasi-periodic functions on black vertices with monodromies $\lambda, \mu$. Analogously, one defines the space $\C^W_{\lambda, \mu} $ of quasi-periodic functions on white vertices. The Kasteleyn operator restricts to a linear map $ \C^B_{\lambda, \mu} \to \C^W_{\lambda, \mu}$, which we denote by $\mathcal K(\lambda, \mu)$. Note that the dimension of $  \C^B_{\lambda, \mu}$ is equal to the number of black vertices of $\Gamma$, while the dimension of $  \C^W_{\lambda, \mu}$ is equal to the number of white vertices, so the dimensions are the same as long as the graph $\Gamma$ admits at least one dimer covering (which we from now on assume to be the case). In particular, the determinant of the operator  $\mathcal K(\lambda, \mu)$ is well-defined.
\begin{definition}
The characteristic polynomial of $\Gamma$ (relative to the homology basis given by $\gamma_1$, $\gamma_2$) is
$$
K(\lambda, \mu) := \det \mathcal K(\lambda, \mu).
$$
\end{definition}
As we show below, for a suitable choice of bases in the spaces $\C^B_{\lambda, \mu}$ and  $\C^W_{\lambda, \mu}$ this function is a Laurent polynomial in terms of $\lambda, \mu$. Since the zero locus of $K(\lambda, \mu)$ in the torus $(\C^*)^2$ is basis-independent, it follows that $K(\lambda, \mu)$ is well-defined up to a monomial factor. Furthermore, up to a monomial factor it coincides with the characteristic polynomial defined in the introduction, i.e. it is a sign-twisted generating function of sums of weights of dimer covers in each homology class. The latter statement is essentially the content of Kasteleyn's theorem \cite{Kas}.\par

The bases in the spaces  $\C^B_{\lambda, \mu}$ and  $\C^W_{\lambda, \mu}$ are constructed as follows. Number the black and white vertices of $\Gamma$ by $1, \dots, n$ (recall that the numbers of black and white vertices are assumed to be the same). Choose a fundamental domain $\Omega$ of the universal covering of the torus bounded by preimages of the cycles $\gamma_1$, $\gamma_2$. Let $g_i^b \colon B \to \C$ be the function that takes the value $1$ at the vertex $b_i \in \Omega$ corresponding to the black vertex $i$ of $\Gamma$, and value $0$ at other black vertices $b \in \Omega$. Then $g_1^b, \dots, g_n^b$ is a basis in $\C^B_{\lambda, \mu}$. Furthermore, the same choice of a fundamental domain determines a basis $g_1^w, \dots, g_n^w$ in $\C^W_{\lambda, \mu}$. The matrix of the operator  $\mathcal K(\lambda, \mu)$ written in those bases is given by
\begin{equation}\label{makm}
\mathcal K_{ij}(\lambda, \mu) = \sum\nolimits_e k(e) \mathrm{wt}(e) \lambda^{\langle e , \gamma_2 \rangle} \mu^{\langle \gamma_1, e\rangle}
\end{equation}
where the sum is taken over all edges going from $i$'th white to $j$'th black vertex of $\Gamma$, and $\langle a, b \rangle$ stands for the intersection number of the curves $a$, $b$. We assume that all intersections are transversal, and that the orientation is chosen in such a way that $\langle \gamma_1, \gamma_2 \rangle = 1$. As before, the edges of $\Gamma$ are oriented from white to black. The matrix $\mathcal K_{ij}(\lambda, \mu) $ is a version of the \textit{magnetically altered Kasteleyn matrix} of \cite{KOS}, with slightly different sign conventions. It is manifestly a Laurent polynomial in $\lambda, \mu$, and hence so is its determinant $K(\lambda, \mu)$.

\paragraph{Dependence of the characteristic polynomial on the  Kasteleyn orientation.} Notice that the ratio of two Kasteleyn markings is a $\Z_2$-valued $1$-cocycle on the torus, so by choosing a reference marking one can identify the space of Kasteleyn markings with the space of $\Z_2$-valued $1$-cocycles for the cellular decomposition of $T^2$ given by the graph~$\Gamma$. Furthermore, it is easy to see that changing a Kasteleyn marking by a coboundary does not affect the characteristic polynomial $K(\lambda, \mu)$ (up to a factor independent of $\lambda$ and $\mu$). So, the characteristic polynomial only depends on the cohomology class of a Kasteleyn marking, also known as a \textit{discrete spin structure}~\cite{CR}. By choosing a reference spin structure, one can identify the space of spin structures with the cohomology group  $\Hom^1(T^2, \Z_2)$. Upon a change of the spin structure, the characteristic polynomial transforms as $K(\lambda, \mu) \mapsto K(\pm \lambda, \pm \mu)$. There are four different spin structures, one for each of the four possible combinations of signs. 
\medskip
\section{A fractional Kasteleyn marking from turning numbers}
\paragraph{Fractional Kasteleyn markings.} In order to relate the  characteristic polynomial of the dimer model to boundary measurements, we extend the definition of a Kasteleyn orientation/marking to allow for fractional markings. The following is equivalent to the notion of a \textit{Kasteleyn line bundle with connection} defined in \cite[Section 1.4]{GK}.
\begin{definition}
A \textit{fractional Kasteleyn marking} on a toric bipartite graph is an assignment of a non-zero complex number to every edge such that:

\begin{enumerate} \item For every face $f$ the alternating product of markings around $f$ is equal to $(-1)^{l(f)/2+1}$ where $l(f)$ is the number of vertices of $f$. \item The alternating product of markings over any cycle is $\pm 1$ (this follows from the first condition for contractible cycles).
\end{enumerate}

\end{definition}
It is clear that one can define the Kasteleyn operator and the characteristic polynomial using a fractional Kasteleyn marking instead of an integral one. The characteristic polynomial is still well-defined, up to a transformation of the form $K(\lambda, \mu) \mapsto K(\pm \lambda, \pm \mu)$.

\paragraph{A canonical fractional Kasteleyn marking from turning numbers.} We now give a construction of a special fractional Kasteleyn marking which is well suited for our purposes. From now on, we assume that the graph $\Gamma$ is drawn on a flat torus, with straight edges, and is obtained from a bipartite network.  The latter can be reformulated by saying that $\Gamma$ is endowed with a \textit{perfect orientation}, i.e. an orientation such that any white vertex has exactly one incoming edge, and any black vertex has exactly one outgoing edge. Such a structure is equivalent to a dimer cover. Indeed, given a dimer cover one obtains a perfect orientation by orienting all edges of the cover from black to white, and all other edges from white to black. And conversely, black-to-white edges of a perfect orientation form a dimer cover. \par

Given a bipartite graph on a flat torus and its perfect orientation, one obtains a fractional Kasteleyn marking as follows. Consider an edge $e$ oriented from white to black. Such an edge has a unique predecessor $e_-$ and a unique successor $e_+$ (see Figure \ref{tn}). Let $\alpha_- \in (-\pi, \pi)$ be the signed angle between the vectors $e_-$ and $e$ (note that the angle between two successive edges cannot be equal to $\pi$). Likewise, let $\alpha_+ \in (-\pi, \pi)$ be the signed angle between $e$ and $e_+$. 

 \begin{figure}[t]
 \centering
\begin{tikzpicture}[, scale = 1]
\node (A) at (0,0.5) {};
\node [draw,circle,color=black, fill=white,inner sep=0pt,minimum size=5pt] (B) at (-0.5,2) {};
\node [draw,circle,color=black, fill=black,inner sep=0pt,minimum size=5pt] (C) at (2,2) {};

\node () at (6.5,2) { $\displaystyle\mathrm{turn}(e) := \exp\left( \frac{\mathrm i}{2}(\alpha_- + \alpha_+)\right)$.
};
            \draw [->-] (A) -- (B) node[midway, right] {$e_-$};

            \draw [dashed] (B) -- +(-0.3, 0.9) coordinate (Bp);
                        \pic [draw, 
      angle radius=3mm, angle eccentricity=1.7, 
      "$\alpha_-$", <-] {angle =C--B--Bp};
            \draw [->-] (B) -- (C) node[midway, above] {$e$};
                        \draw [->-] (B) -- +(-1,-1);
                                   \draw [-<-] (C) -- +(1,-1);
                               \draw [->] (C) -- (2.7, 3.2) coordinate (Ca)  node[midway, left] {$e_+$};  
                            \draw [dashed] (C) -- +(1, +0) coordinate (Cpp);     
                                      \pic [draw, 
      angle radius=5mm, angle eccentricity=1.6, 
      "$\alpha_+$", ->] {angle =Cpp--C--Ca};   

\end{tikzpicture}
\caption{To the definition of the turning number.}\label{tn}
\end{figure}
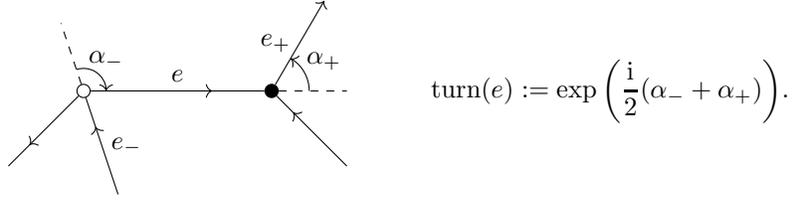

\begin{definition} The \textit{turning number} of a white-to-black edge $e$ is
$$
\mathrm{turn}(e) :=  \exp\left( \frac{\mathrm i}{2}(\alpha_- + \alpha_+)\right).
$$
\end{definition}

\begin{proposition}\label{prop:Kast}
The assignment of the turning number to each white-to-black edge and $-1$ to each black-to-white edge is a fractional Kasteleyn marking.
\end{proposition}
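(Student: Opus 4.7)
The plan is to verify the two defining conditions of a fractional Kasteleyn marking directly. Since the cycle condition follows from the face condition on contractible cycles, it is enough (i) to establish the face identity for every face $f$ and (ii) to check that the alternating product is $\pm 1$ along a basis of non-contractible $1$-cycles of $T^2$.

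For (i), fix a face $f$ of length $2k$ and traverse $\partial f$ counterclockwise. The alternating product factors as
$$
P_f \;=\; (-1)^{c(f)}\,\exp\!\Bigl(\tfrac{i}{2}A_f\Bigr), \qquad A_f \;=\; \sum_{e} \epsilon(e)\bigl(\alpha_-(e)+\alpha_+(e)\bigr),
$$
where the sum runs over non-cover edges $e$ of $\partial f$, $c(f)$ is the number of dimer edges on $\partial f$, and $\epsilon(e)=\pm 1$ records whether the face traversal of $e$ agrees with its bipartite orientation. The target identity then becomes $A_f \equiv 2\pi\,(k+1-c(f)) \pmod{4\pi}$.

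The next step is to localize $A_f$ at the vertices of $\partial f$. At each vertex $v$, the contributions come from the non-cover edges of $\partial f$ at $v$, entering through $\alpha_-$ when $v$ is white and $\alpha_+$ when $v$ is black. A short case analysis, depending on whether the unique cover edge at $v$ is one of the two edges of $\partial f$ at $v$ or lies off $\partial f$, rewrites each vertex contribution as the exterior turning angle $T_v\in(-\pi,\pi)$ of $\partial f$ at $v$, up to a correction of $0$ or $\pi$ determined by the case. Summing, the Hopf Umlaufsatz for a contractible polygon on the flat torus gives $\sum_v T_v = 2\pi$, while the $\pi$-corrections balance because each on-$\partial f$ cover edge is incident to exactly one white and one black vertex of $\partial f$. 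For (ii), the same local analysis applies to any simple closed polygonal cycle $\gamma$ on the flat torus: $\sum_v T_v$ along $\gamma$ is the tangent winding, always a multiple of $2\pi$, so $\exp(\tfrac{i}{2}A_\gamma)\in\{\pm 1\}$ and $(-1)^{c(\gamma)}\exp(\tfrac{i}{2}A_\gamma) = \pm 1$.

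The main obstacle will be that the raw vertex-by-vertex telescoping determines $A_f$ only modulo $2\pi$, while the face identity must be checked modulo $4\pi$. Closing this parity gap requires a careful accounting of the principal-value reductions of the angles $\alpha_\pm\in(-\pi,\pi)$, controlled by the cyclic combinatorial pattern of cover versus non-cover edges around $\partial f$ together with the fact that $\partial f$ is an embedded polygon in the straight-line realization on the flat torus.
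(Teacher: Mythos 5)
Your plan is correct and essentially reproduces the paper's argument: the paper likewise breaks the alternating product into per-vertex angle contributions, identifies each with the exterior turning angle at a non-switch vertex and with the exterior angle minus $\pi$ at a switch (a vertex whose matched edge leaves $\partial f$), invokes the total turning of $2\pi$ for an embedded face boundary (a multiple of $2\pi$ for a general cycle), and counts the $\pi$-corrections via $s(f)=l(f)-2\,bw(f)$ (Lemma \ref{lemmaKas}), which is exactly your observation that each on-boundary cover edge supplies one white and one black non-switch vertex. The ``parity gap'' you flag is closed in the paper by noting that at a switch the off-boundary cover edge points into the exterior of the face, which pins the vertex contribution down to $\exp(-\tfrac{\mathrm i}{2}\,\mathrm{int}(v))$ exactly, not just up to sign.
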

\begin{remark}
Recall that in terms of Kasteleyn orientations, $-1$ means from black to white. That means we keep the orientation of black-to-white edges unchanged. As for white-to-black edges, they get fractional orientations.
\end{remark}
The proof of Proposition \ref{prop:Kast} is given at the end of this section.
\begin{example}\label{excycle}
 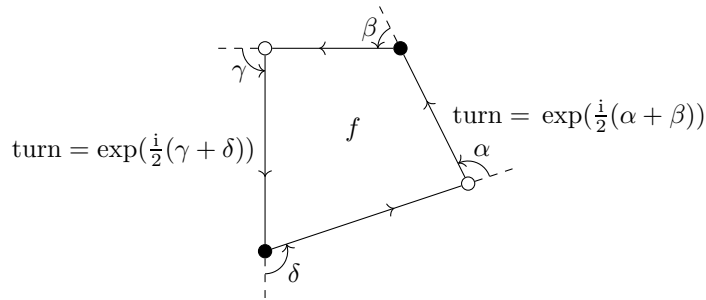
\begin{figure}[b]
 \centering
\begin{tikzpicture}[, scale = 0.9, decoration = {snake,   
                    pre length=3pt,post length=7pt,
                    },]

\node [draw,circle,color=black, fill=white,inner sep=0pt,minimum size=5pt] (A) at (3,0) {};
\node [draw,circle,color=black, fill=black,inner sep=0pt,minimum size=5pt] (B) at (2,2) {};
\node [draw,circle,color=black, fill=white,inner sep=0pt,minimum size=5pt] (C) at (0,2) {};
\node [draw,circle,color=black, fill=black,inner sep=0pt,minimum size=5pt] (D) at (0,-1) {};

     \node  () at (1.3,0.8) {$f$};   

\draw [->-] (A) -- (B) node[midway, right] {$\,\,\mathrm{turn} = \,\exp(\frac{\mathrm i}{2}(\alpha + \beta))$};   ;
\draw [->-] (B) -- (C);
\draw [->-] (C) -- (D)node[midway, left] {$\mathrm{turn} = \exp(\frac{\mathrm i}{2}(\gamma + \delta))$};  ;
\draw [->-] (D) -- (A);
\draw [dashed] (A) -- +(0.75,0.25) coordinate (Ap);
                   \pic [draw, 
      angle radius=3mm, angle eccentricity=1.6, 
      "$\alpha$", ->] {angle =Ap--A--B};
\draw [dashed] (B) -- +(-0.35,0.7) coordinate (Bp);
\draw [dashed] (C) -- +(-0.75,0) coordinate (Cp);
\draw [dashed] (D) -- +(0,-0.75)coordinate (Dp);
                   \pic [draw, 
      angle radius=3mm, angle eccentricity=1.6, 
      "$\beta$", ->] {angle =Bp--B--C};
                         \pic [draw, 
      angle radius=3mm, angle eccentricity=1.6, 
      "$\gamma$", ->] {angle =Cp--C--D};
                         \pic [draw, 
      angle radius=3mm, angle eccentricity=1.6, 
      "$\delta$", ->] {angle =Dp--D--A};
%

\end{tikzpicture}
\caption{Turning numbers in a directed cycle.}\label{Kas3}
\end{figure}
Assume that all edges of a face $f$ are oriented in the same direction. Then, since every second edge is white-to-black, and every other edge is black-to-white, the alternating product of markings is equal to the product of turning numbers of white-to-black edges times $(-1)^{l(f)/2}$. Furthermore, for any white-to-black edge $e$ of $f$ we have $$\mathrm{turn}(e) = \exp\left(\frac{\mathrm i}{2}(\mathrm{ext}_+(e) + \mathrm{ext}_-(e))\right),$$ where $\mathrm{ext}_\pm(e)$ are exterior angles of $f$ adjacent to~$e$, 
see Figure \ref{Kas3}. And since the sum of all exterior angles is $2\pi$, the product of turning numbers is $\exp(\pi \mathrm i) = -1$. Therefore, the alternating product of all markings is indeed~$
(-1)^{l(f)/2 + 1}.
$
\end{example}

\begin{example}\label{exkm}
 \begin{figure}[t]
 \centering
\begin{tikzpicture}[, scale = 1]
\node [draw,circle,color=black, fill=black,inner sep=0pt,minimum size=5pt] (A) at (0,0) {};
\node [draw,circle,color=black, fill=white,inner sep=0pt,minimum size=5pt] (B) at (-0.5,2) {};
\node [draw,circle,color=black, fill=black,inner sep=0pt,minimum size=5pt] (C) at (2,2.5) {};
\node [draw,circle,color=black, fill=white,inner sep=0pt,minimum size=5pt] (D) at (2.5,-0) {};
            \draw [->-] (A) -- (B) node[midway, left] {$e_3$};

            \draw [dashed] (B) -- (-0.7, 2.8) coordinate (Bp);
                        \pic [draw, 
      angle radius=3mm, angle eccentricity=1.6, 
      "$\alpha$"] {angle =C--B--Bp};
            \draw [->-] (B) -- (C) node[midway, above] {$e_2$};
            \draw [->-] (D) -- (A) node[midway, below] {$e_4$};   
                 \draw [->-] (D) -- (C) node[midway, right] {$e_1$};   
                 \draw [<-]  (D) -- +(0.7, -0.7) ;
                               \draw [->] (C) -- (2.7, 3.2) coordinate (Ca);  
                     \draw [dashed] (C) -- +(-0.2, +1) coordinate (Cp);     
                            \draw [dashed] (C) -- +(1.25, +0.25) coordinate (Cpp);     
                                                 \pic [draw, 
      angle radius=4mm, angle eccentricity=1.5, 
      "$\beta$"] {angle =Ca--C--Cp};   
                                      \pic [draw, 
      angle radius=5mm, angle eccentricity=1.4, 
      "$\gamma$"] {angle =Cpp--C--Ca};   
                                                   \draw [dashed] (D) -- +(-0.8, 0.8)  coordinate (Dp);   
                                                                                         \pic [draw, 
      angle radius=5mm, angle eccentricity=1.4, 
      "$\delta$"] {angle =C--D--Dp};   
                 \pic [draw, 
      angle radius=6mm, angle eccentricity=1.3, 
      "$\phi$"] {angle =Dp--D--A};   
                                                           \draw [dashed] (A) -- +(-0.9,0)  coordinate (Ap); 
                                                                            \node  () at (1,1.2) {$f$};   
                                                                                           \pic [draw, 
      angle radius=3mm, angle eccentricity=1.6, 
      "$\psi$"] {angle =B--A--Ap};   

\end{tikzpicture}
\caption{To Example \ref{exkm}.}\label{Kas}
\end{figure}
Consider a face shown in Figure \ref{Kas}. Here $\alpha, \beta, \gamma, \delta, \phi, \psi \in [0, \pi)$ are unsigned angles. The turning numbers of white-to-black edges are
$$
 \mathrm{turn}(e_1) =  \exp\left(-\frac{\mathrm i}{2}({\beta + \delta})\right), \quad  \mathrm{turn}(e_2) =  \exp\left(\frac{\mathrm i}{2}({\gamma - \alpha})\right), \quad   \mathrm{turn}(e_4) =  \exp\left(\frac{\mathrm i}{2}({\phi - \psi})\right),
$$
so the alternating product of markings  is
\begin{gather*}
-\frac{ \mathrm{turn}(e_1) \mathrm{turn}(e_3)}{ \mathrm{turn}(e_4)} = -\exp\left({\frac{\mathrm i}{2}(-(\gamma  + \beta) - (\delta + \phi) +\alpha + \psi)}\right). 
\end{gather*}
The sum of two exterior angles of a quadrilateral is equal to the sum of two non-adjacent interior angles, so the latter expression is equal to $-1$, in agreement with Proposition \ref{prop:Kast}.

\end{example}

\begin{example}\label{ex:sg}
 \begin{figure}[b]
 \centering
\begin{tikzpicture}[, scale = 1]
\node [draw,circle,color=black, fill=white,inner sep=0pt,minimum size=5pt] (A) at (0,0) {};
\node [draw,circle,color=black, fill=black,inner sep=0pt,minimum size=5pt] (B) at (0,1) {};
\node [draw,circle,color=black, fill=white,inner sep=0pt,minimum size=5pt] (C) at (0,2) {};
\node [draw,circle,color=black, fill=black,inner sep=0pt,minimum size=5pt] (A1) at (1,0) {};
\node [draw,circle,color=black, fill=white,inner sep=0pt,minimum size=5pt] (B1) at (1,1) {};
\node [draw,circle,color=black, fill=black,inner sep=0pt,minimum size=5pt] (C1) at (1,2) {};
\node [draw,circle,color=black, fill=white,inner sep=0pt,minimum size=5pt] (A2) at (2,0) {};
\node [draw,circle,color=black, fill=black,inner sep=0pt,minimum size=5pt] (B2) at (2,1) {};
\node [draw,circle,color=black, fill=white,inner sep=0pt,minimum size=5pt] (C2) at (2,2) {};
\node [draw,circle,color=black, fill=black,inner sep=0pt,minimum size=5pt] (A3) at (3,0) {};
\node [draw,circle,color=black, fill=white,inner sep=0pt,minimum size=5pt] (B3) at (3,1) {};
\node [draw,circle,color=black, fill=black,inner sep=0pt,minimum size=5pt] (C3) at (3,2) {};

\draw [->-] (A) -- (A1);
\draw [->-] (A1) -- (A2);
\draw [->-] (A2) -- (A3);
\draw [->-] (B) -- (B1);
\draw [->-] (B1) -- (B2);
\draw [->-] (B2) -- (B3);
\draw [->-] (C) -- (C1);
\draw [->-] (C1) -- (C2);
\draw [->-] (C2) -- (C3);
\draw [->-] (A) -- (B);
\draw [->-] (C) -- (B);
\draw [->-] (A2) -- (B2);
\draw [->-] (C2) -- (B2);
\draw [->-] (B1) -- (A1);
\draw [->-] (B1) -- (C1);
\draw [->-] (B3) -- (A3);
\draw [->-] (B3) -- (C3);
\draw (A) -- +(0,-0.4);
\draw (A1) -- +(0,-0.4);
\draw (A2) -- +(0,-0.4);
\draw (A3) -- +(0,-0.4);
\draw (C) -- +(0,0.4);
\draw (C1) -- +(0,0.4);
\draw (C2) -- +(0,0.4);
\draw (C3) -- +(0,0.4);
\draw (A) -- +(-0.4,0);
\draw (B) -- +(-0.4,0);
\draw (C) -- +(-0.4,0);
\draw (A3) -- +(0.4,0);
\draw (B3) -- +(0.4,0);
\draw (C3) -- +(0.4,0);
\end{tikzpicture}
\caption{A perfect orientation which is a Kasteleyn orientation.}\label{fig:sg}
\end{figure}
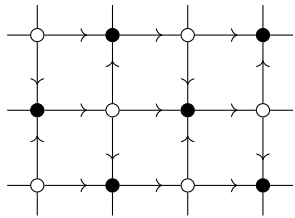
Consider a perfect orientation of a square grid on a torus shown in Figure \ref{fig:sg}. Here all black-to-white edges are parallel to each other, so the turning number of every white-to-black edge is equal to $1$. Therefore, the associated fractional Kasteleyn orientation is actually integral and coincides with the perfect orientation. The given perfect orientation is indeed Kasteleyn, since every face has exactly one black-to-white edge.
\end{example}

\begin{example}\label{ex:sg2}
 \begin{figure}[t]
 \centering
\begin{tikzpicture}[, scale = 1.2]
\node [draw,circle,color=black, fill=white,inner sep=0pt,minimum size=5pt] (A) at (0,0) {};
\node [draw,circle,color=black, fill=black,inner sep=0pt,minimum size=5pt] (B) at (0,1) {};
\node [draw,circle,color=black, fill=white,inner sep=0pt,minimum size=5pt] (C) at (0,2) {};
\node [draw,circle,color=black, fill=black,inner sep=0pt,minimum size=5pt] (A1) at (1,0) {};
\node [draw,circle,color=black, fill=white,inner sep=0pt,minimum size=5pt] (B1) at (1,1) {};
\node [draw,circle,color=black, fill=black,inner sep=0pt,minimum size=5pt] (C1) at (1,2) {};
\node [draw,circle,color=black, fill=white,inner sep=0pt,minimum size=5pt] (A2) at (2,0) {};
\node [draw,circle,color=black, fill=black,inner sep=0pt,minimum size=5pt] (B2) at (2,1) {};
\node [draw,circle,color=black, fill=white,inner sep=0pt,minimum size=5pt] (C2) at (2,2) {};
\node [draw,circle,color=black, fill=black,inner sep=0pt,minimum size=5pt] (A3) at (3,0) {};
\node [draw,circle,color=black, fill=white,inner sep=0pt,minimum size=5pt] (B3) at (3,1) {};
\node [draw,circle,color=black, fill=black,inner sep=0pt,minimum size=5pt] (C3) at (3,2) {};

\draw [->-] (A) -- (A1) node[midway, below] {$1$};  
\draw [->-] (A1) -- (A2) node[midway, below] {$-1$}; ;
\draw [->-] (A2) -- (A3) node[midway, below] {$1$}; ;
\draw [->-] (B1) -- (B) node[midway, below] {$1$};  
\draw [->-] (B2) -- (B1) node[midway, below] {$-1$};  ;
\draw [->-] (B3) -- (B2) node[midway, below] {$1$};  ;
\draw [->-] (C) -- (C1) node[midway, below] {$1$};  ;
\draw [->-] (C1) -- (C2) node[midway, below] {$-1$};  ;
\draw [->-] (C2) -- (C3) node[midway, below] {$1$};  ;
\draw [->-] (A) -- (B) node[midway, left] {$\mathrm i$};  ;
\draw [->-] (C) -- (B)  node[midway, left] {$-\mathrm i$};  ;;
\draw [->-] (A2) -- (B2) node[midway, left] {$\mathrm i$}; ;
\draw [->-] (C2) -- (B2)  node[midway, left] {$-\mathrm i$}; ;
\draw [->-] (B1) -- (A1) node[midway, left] {$\mathrm i$}; ;
\draw [->-] (B1) -- (C1) node[midway, left] {$-\mathrm i$}; 
\draw [->-] (B3) -- (A3) node[midway, left] {$\mathrm i$}; ;
\draw [->-] (B3) -- (C3)  node[midway, left] {$-\mathrm i$}; 
\draw (A) -- +(0,-0.4);
\draw (A1) -- +(0,-0.4);
\draw (A2) -- +(0,-0.4);
\draw (A3) -- +(0,-0.4);
\draw (C) -- +(0,0.4);
\draw (C1) -- +(0,0.4);
\draw (C2) -- +(0,0.4);
\draw (C3) -- +(0,0.4);
\draw (A) -- +(-0.4,0);
\draw (B) -- +(-0.4,0);
\draw (C) -- +(-0.4,0);
\draw (A3) -- +(0.4,0);
\draw (B3) -- +(0.4,0);
\draw (C3) -- +(0.4,0);
\end{tikzpicture}
\caption{A perfect orientation of the square grid and the associated Kasteleyn marking.}\label{fig:sg2}
\end{figure}
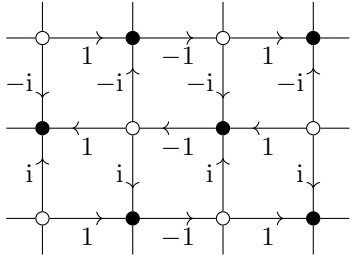
Figure \ref{fig:sg2} shows another perfect orientation of the square grid. Labels next to edges show the associated fractional Kasteleyn marking. The product of markings around every face is $-1$. It is also easy to see that the product of markings along any cycle is $\pm 1$. So this is indeed a Kasteleyn marking.
\end{example}

To prove Proposition \ref{prop:Kast}, we first establish a lemma. Say that a vertex $v$ of a face $f$ is a \textit{switch} if the edges of $f$ adjacent to $v$ have opposite orientations. 
\begin{lemma}\label{lemmaKas}
Let $bw(f)$ be the number of black-to-white edges of $f$. Then the number of switches is given by
$$
s(f) = l(f) - 2bw(f).
$$
\end{lemma}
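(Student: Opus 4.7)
The plan is a double-counting argument applied to the set $S$ of pairs $(v,e)$ where $v$ is a vertex of $f$ and $e$ is a face edge of $f$ incident to $v$ and oriented from black to white. I will compute $|S|$ in two ways and compare.

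First I count by edges. Every black-to-white face edge of $f$ has two endpoints, each of which is a vertex of $f$, and so contributes exactly $2$ to $|S|$. Thus $|S| = 2\,bw(f)$.

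Next I count by vertices. Pick a vertex $v$ of $f$ and let $e_1, e_2$ be the two face edges of $f$ at $v$. By the definition of a switch, $v$ is a switch exactly when $e_1$ and $e_2$ either both point toward $v$ or both point away from $v$; otherwise one arrow enters $v$ and the other leaves. Now apply the perfect-orientation hypothesis: a white vertex has a unique incoming edge in the whole graph, so at a white vertex the case ``both $e_1, e_2$ point toward $v$'' is impossible; symmetrically, at a black vertex the case ``both point away'' is impossible. It follows that at every switch $v$ both face edges are white-to-black (outgoing at a white switch, incoming at a black switch), contributing $0$ pairs to $|S|$; while at every non-switch exactly one of $e_1, e_2$ is oriented from black to white at $v$, contributing $1$ pair. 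Summing over the $l(f)$ vertices of $f$ gives $|S| = l(f) - s(f)$.

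Equating the two counts produces $l(f) - s(f) = 2\,bw(f)$, i.e.\ $s(f) = l(f) - 2\,bw(f)$, as claimed. There is no real obstacle here; the only point that requires care is the case analysis by vertex color, in which the perfect orientation eliminates two of the four a priori possible switch configurations.
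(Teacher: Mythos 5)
Your proof is correct, and it takes a genuinely different route from the paper's. The paper cuts the boundary cycle of $f$ at its switches into $s(f)$ maximal directed intervals, argues that each interval runs from a white vertex to a black one and therefore consists of $2k+1$ edges of which exactly $k$ are black-to-white, and sums the resulting contribution of $1$ per interval to $l(f)-2bw(f)$; it needs a separate (trivial) case when there are no switches at all, since then there are no intervals to decompose into. You instead double-count incidences $(v,e)$ with $e$ a black-to-white boundary edge at $v$: counting by edges gives $2\,bw(f)$, and your local case analysis---a switch contributes $0$ because the perfect orientation forces both of its boundary edges to be white-to-black, while a non-switch contributes exactly $1$---gives $l(f)-s(f)$. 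The two arguments rest on the same structural input, namely the perfect orientation together with leaflessness; the latter is what guarantees that the two boundary edges of $f$ at a given vertex are distinct edges of the graph, so that ``two incoming face edges at a white vertex'' really contradicts ``exactly one incoming edge,'' and this is exactly where the paper's parenthetical about univalent vertices enters. Your version is purely local, avoids the empty-decomposition corner case, and is consonant with the vertex-by-vertex bookkeeping the paper later performs when the lemma is applied in the proof of Proposition \ref{prop:Kast}. It would be worth stating explicitly that you are invoking leaflessness at the step where you rule out two of the four switch configurations.
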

\begin{proof}[\rm \bf Proof of the lemma]
First assume that there are no switches. Then the boundary of $f$ is a directed cycle half of whose edges are oriented white-to-black, and half black-to-white. So $l(f) = 2bw(f)$, and the desired formula holds. Now assume that there is at least one switch. Then the switches split the boundary of $f$ into $s(f)$ oriented intervals. Every interval starts at a white vertex and ends at a black one (it is important that we have no univalent vertices, otherwise this would not be true), and hence has an odd number $2k+1$ edges, $k$ of which are black-to-white. So, the contribution of each interval to the quantity $ l(f) - 2bw(f)$ is $1$, and  $ l(f) - 2bw(f)$ is equal to the number of intervals, i.e. $s(f)$.
\end{proof}

\begin{proof}[\rm \bf Proof of Proposition \ref{prop:Kast}]
Consider a face $f$. The alternating product of markings around $f$ is
$
(-1)^{bw(f)}
$
times a product of terms of the form $\mathrm{turn}(e_j)^{\pm 1}$ where $e_j$ are white-to-black edges of $f$. Furthermore, since every angle that contributes to the latter product is between edges that share a vertex, one can break that product down into terms corresponding to individual vertices. So, the alternating product of markings is $(-1)^{bw(f)}$ times a certain product over vertices. The contribution of each vertex $v$ depends on whether $v$ is a switch. If not, then the contribution of $v$ is $\exp(\frac{\mathrm i}{2}\mathrm{ext}(v))$ where $\mathrm{ext}(v)$ is the exterior angle of $f$ at $v$ (cf. Example \ref{exkm}). If $v$ is a switch, then the contribution is $\exp(-\frac{\mathrm i}{2}\mathrm{int}(v))$ where $\mathrm{int}(v) = \pi - \mathrm{ext}(v)$ is the interior angle at $v$. Since the sum of exterior angles is $2\pi$, the total contribution of all vertices is $-\exp\left(-\frac{\pi \mathrm i}{2} s(f)\right)$. By Lemma \ref{lemmaKas}, this rewrites as
$$
-\exp\left(-\frac{\pi \mathrm i}{2} (l(f) - 2bw(f))\right) = (-1)^{l(f) / 2 + bw(f) + 1}
$$
so the product of all markings around $f$ is indeed $(-1)^{l(f) / 2 + 1}$.\par 
For an arbitrary cycle, the argument is similar, but in that case the contribution of a switch is either $\exp(-\frac{\mathrm i}{2}\mathrm{int}(v))$, or $\exp(-\frac{\mathrm i}{2}(\mathrm{int}(v) - 2\pi))$, while the sum of exterior angles may be an arbitrary multiple of $2\pi$. So, in general the product of markings is $\pm 1$.
\end{proof}

\medskip

\section{The boundary path matrix and its characteristic polynomial}
\paragraph{Characteristic polynomial of the boundary path matrix for an arbitrary network.} Consider an arbitrary finite network (i.e. a weighted directed graph, not necessarily embedded) $\mathcal N$. The \textit{weighted path matrix} of $\mathcal N$ is the matrix whose $(i,j)$ entry is the formal sum of weights of all directed paths going from $i$'th to $j$'th vertex \cite{talaska2012determinants}. The entries of that matrix are formal series in terms of the weights which are actually rational functions. Indeed, let $A$ be the \textit{weighted adjacency matrix} of $\mathcal N$, i.e. the matrix whose $(i,j)$ entry is  the sum of weights of edges going from $i$'th to $j$'th vertex. Then the weighted path matrix is $ \Id + A + A^2 + \dots = (\Id - A)^{-1}$.\par

Now assume that $\mathcal N$ has $n$ distinguished labeled univalent sources and and $n$ distinguished labeled univalent sinks (there may be other sources and sinks as well, but in what follows by sources and sinks we mean these distinguished ones). We assume that the sources cannot be directly connected to  sinks. Define the \textit{boundary path matrix} $B$ as the matrix whose $(i,j)$ entry is the formal sum of weights of all directed paths going from $i$'th source to $j$'th  sink. It is a submatrix of the weighted path matrix. The proposition below gives a formula for the characteristic polynomial of $B$ in terms of certain adjacency matrices. Consider the network $\bar {\mathcal N}(\mu)$ obtained from $\mathcal N$ by gluing  sources to the corresponding  sinks, deleting the obtained $2$-valent vertices, and defining the weight of every newly formed edge as the product of weights of two edges of $\mathcal N$ it came from times $\mu$. Let $\bar A(\mu)$ be the weighted adjacency matrix of $\bar {\mathcal N}(\mu)$. 
\begin{proposition}\label{prop:initfla}
One has
$$
 \det( \Id - \mu B)=\frac{\det (\Id - \bar A(\mu))}{\det(\Id - \bar A(0))}.
$$
\end{proposition}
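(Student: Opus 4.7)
The plan is to reduce the identity to Sylvester's determinant identity $\det(\Id - XY) = \det(\Id - YX)$ by means of a block decomposition of the weighted adjacency matrix of $\mathcal N$.

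First, I would partition the vertex set as $S \sqcup V \sqcup T$, where $S$ is the set of sources, $T$ the set of sinks, and $V$ the set of all remaining vertices. Because sources and sinks are univalent (with only outgoing, respectively only incoming, edges) and no edge directly joins $S$ to $T$, the adjacency matrix $A$ of $\mathcal N$ has strictly upper-triangular block form with just three nontrivial blocks: $C := A_{SV}$, $M := A_{VV}$, and $D := A_{VT}$. Any directed path from a source to a sink must begin with the unique outgoing source-edge, run through $V$, and terminate with the unique incoming sink-edge, so reading off the $S \times T$ block of the weighted path matrix $(\Id - A)^{-1}$ yields
$$
B \;=\; C\,(\Id - M)^{-1} D
$$
as an identity of rational functions in the edge weights.

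Next, I would describe the glued network. When source $i$ is identified with sink $i$ and the resulting $2$-valent vertex is contracted (with its $\mu$ factor absorbed into the new edge weight), summing over $i$ contributes $\mu \sum_i D_{v' i} C_{i v} = \mu (DC)_{v' v}$ to the weight of the edge $v' \to v$ in $\bar{\mathcal N}(\mu)$. Hence
$$
\bar A(\mu) \;=\; M \,+\, \mu\, DC, \qquad\text{so in particular}\qquad \bar A(0) = M.
$$

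Finally, I would carry out the determinant manipulation. Factoring,
$$
\det(\Id - \bar A(\mu)) \;=\; \det(\Id - M)\;\det\!\big(\Id - \mu\,(\Id - M)^{-1} DC\big),
$$
and applying Sylvester's identity with $X = (\Id - M)^{-1} D$ and $Y = \mu C$ turns the second factor into $\det(\Id - \mu\, C (\Id - M)^{-1} D) = \det(\Id - \mu B)$. Dividing by $\det(\Id - M) = \det(\Id - \bar A(0))$ gives the claim. I do not anticipate any serious obstacle; the one subtlety worth flagging is that $(\Id - M)^{-1}$ should be treated as a matrix of rational functions (rather than a formal power series in the weights) so that the determinant manipulations and Sylvester's identity apply verbatim.
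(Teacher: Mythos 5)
Your proposal is correct and follows essentially the same route as the paper: your $C$, $M$, $D$, and $(\Id - M)^{-1}$ are the paper's $X$, $\hat A$, $Y$, and $\hat W$, the identity $\bar A(\mu) = M + \mu DC$ and the factorization $B = C(\Id - M)^{-1}D$ both appear there, and the final step is the same application of $\det(\Id - PQ) = \det(\Id - QP)$. No substantive differences to report.
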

\begin{proof}
Call vertices of $\mathcal N$ that are neither sources nor sinks \textit{internal}. Let $m$ be their number. Let also $X$ be the $n \times m$ matrix whose entries are the weights of edges going from sources to internal vertices, and let $Y$ be the $m \times n$ matrix whose entries are the weights of edges going from internal vertices to sinks. Consider the network $\hat{\mathcal N}$ obtained from $\mathcal N$ by removing  sources and sinks (along with adjacent edges), and let $\hat A$ be its weighted adjacency matrix.
Then, by construction of the network $\bar {\mathcal N}(\mu)$, we have
\begin{equation}\label{eqbarA}
 \bar A(\mu) = \hat A + \mu YX.
\end{equation}
Let also $\hat W = (\Id - \hat A)^{-1}$ be the weighted path matrix of the network $\hat{\mathcal N}$. Then \eqref{eqbarA} implies
\begin{equation}\label{eqYX}
\hat W(\Id - \bar A(\mu)) = \hat W(\Id - \hat A)  -  \mu \hat WYX = \Id  -  \mu \hat WYX.
\end{equation}
Further, observe that any path in $\mathcal N$ going from a source to a sink can be uniquely represented as a concatenation of three paths: a path of length $1$ starting at a source, a path in $\hat{\mathcal N}$, and a path of length $1$ ending at a sink. Therefore, we have $B = X\hat WY$, and
\begin{equation}\label{eqXY}
\Id- \mu B = \Id - \mu X\hat WY.
\end{equation}
Now, using that $\det(\Id +  PQ) = \det(\Id +  QP)$ for any $n\times m$ matrix $P$ and $m \times n$ matrix $Q$, from \eqref{eqYX} and \eqref{eqXY} we get that
\begin{equation*}
\det(\hat W(\Id - \bar A(\mu))) = \det(\Id - \mu B).
\end{equation*}
Since $\hat W = (\Id - \hat A)^{-1}$ and $\hat A= \bar A(0)$, the result follows.
  \end{proof}
\paragraph{Characteristic polynomial of the boundary path matrix for a cylindric/toric network.} Now assume that $\mathcal N$ is embedded in a cylinder, with sources and sinks at the opposite boundary components. Assume also that $\mathcal N$ is endowed with a cut, i.e. a distinguished path connecting the boundary components which becomes a cycle when the boundary components are glued together in such a way that every source is identified with the corresponding sink. In that case, the weight of a path $\gamma$ is defined as the product of weights of its edges multiplied by $\lambda^{\mathrm{ind}(\gamma)}$ where the index ${\mathrm{ind}(\gamma)}$ of $\gamma$ is defined as the intersection number of $\gamma$ with the cut. The corresponding boundary path matrix $B(\lambda)$ can be viewed as the unsigned version of the boundary measurement matrix. Note that this definition of the boundary path matrix reduces to the one given above if we multiply the weights of edges of  $\mathcal N$ crossing the cut by $\lambda^{\pm 1}$, depending on the sign of the intersection. As a result, we obtain the following formula: 
\begin{corollary}\label{cor:bpm}
Consider a network $\mathcal N$ on a torus, with a chosen ideal rim and cut. Let $B(\lambda)$ be the boundary path matrix of the corresponding network on a cylinder, and let $  A(\lambda, \mu)$ be the adjacency matrix of the network ${\mathcal N}(\lambda, \mu)$ obtained from $\mathcal N$ by multiplying the weights of edges crossing the cut by $\lambda^{\pm 1}$ (depending on the sign of the intersection) and weights of edges crossing the rim by $\mu$.
Then
$$
 \det( \Id - \mu B(\lambda))=\frac{\det (\Id -  A(\lambda, \mu))}{\det(\Id -  A(\lambda, 0))}.
$$
\end{corollary}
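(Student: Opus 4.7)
The plan is to derive the corollary as an immediate consequence of Proposition~\ref{prop:initfla}, by identifying the toric setup with the abstract cylindric one considered there. The main work is bookkeeping: once the identifications are made, the formula falls out with no further computation.

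First, I would cut $\mathcal N$ along the rim to obtain a network $\mathcal N_{\mathrm{cyl}}$ on a cylinder. Each intersection of the rim with an edge of $\mathcal N$ produces a pair of new univalent vertices on the two boundary circles. Since every intersection of the rim with $\mathcal N$ has the same sign, these boundary vertices are consistently split into sources on one boundary and sinks on the other, which is exactly the setup of Proposition~\ref{prop:initfla}. Now modify the weights on $\mathcal N_{\mathrm{cyl}}$ by multiplying every edge that crosses the cut by $\lambda^{\pm 1}$ according to the sign of the intersection. Then any directed path $\gamma$ from source $i$ to sink $j$ picks up precisely a factor $\lambda^{\mathrm{ind}(\gamma)}$, so the boundary path matrix of $\mathcal N_{\mathrm{cyl}}$ with these weights equals $B(\lambda)$.

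Next, I would unwind the definition of $\bar{\mathcal N}_{\mathrm{cyl}}(\mu)$ from Proposition~\ref{prop:initfla}: it is obtained from the weighted $\mathcal N_{\mathrm{cyl}}$ by gluing each source to the corresponding sink, deleting the resulting bivalent vertex, and attaching an extra factor of $\mu$ to each newly formed edge. But gluing sources to the corresponding sinks is precisely the inverse of the rim cut, and the new edges are exactly those edges of $\mathcal N$ that crossed the rim. Hence $\bar{\mathcal N}_{\mathrm{cyl}}(\mu)$ coincides, as a weighted graph, with the network $\mathcal N(\lambda,\mu)$ of the corollary: its adjacency matrix is $\bar A(\mu)=A(\lambda,\mu)$, and in particular $\bar A(0)=A(\lambda,0)$.

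Finally, substituting $B=B(\lambda)$ and $\bar A(\mu)=A(\lambda,\mu)$ into the identity of Proposition~\ref{prop:initfla} yields the claimed equality. The only point that requires care is that the rim intersects edges with a uniform sign (so that the cut genuinely produces univalent sources on one side and univalent sinks on the other, matching the hypothesis of Proposition~\ref{prop:initfla}); this is guaranteed by the definition of an ideal rim. Everything else is a direct unravelling of definitions, with the $\lambda$-twist commuting harmlessly with all operations in the proof of Proposition~\ref{prop:initfla}.
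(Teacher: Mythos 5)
Your proposal is correct and follows exactly the paper's route: the paper's proof is the one-line instruction to apply Proposition~\ref{prop:initfla} to the network obtained by cutting $\mathcal N$ along the rim and twisting the weights of cut-crossing edges by $\lambda^{\pm 1}$, and your argument is precisely this with the bookkeeping (that the gluing in Proposition~\ref{prop:initfla} inverts the rim cut and places the factor $\mu$ on the rim-crossing edges, so $\bar A(\mu)=A(\lambda,\mu)$) spelled out. No gaps.
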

\begin{proof}
Apply Proposition \ref{prop:initfla} to the network obtained from $\mathcal N$ by cutting the torus along the rim and multiplying the weights of edges crossing the cut by $\lambda^{\pm 1}$ (as usual, when an edge is cut into two, the weights of the newly formed edges are defined in such a way that their product is equal to the weight of the initial edge).
\end{proof}
\paragraph{Characteristic polynomial of the boundary path matrix for a bipartite perfect network.} 
In what follows, we will need a version of Corollary \ref{cor:bpm} for a bipartite perfect network. First, consider an arbitrary bipartite perfect network $\mathcal N$ (not necessarily embedded). Label the vertices in such a way that the unique edge starting at $i$'th black vertex ends at $i$'th white vertex. Define the \textit{bipartite adjacency matrix} $\mathcal A$ as follows: its $(i,j)$ entry is the sum of weights of edges connecting the $i$'th white vertex with $j$'th black vertex (the edges do not have to be oriented from white to black). Let $\mathcal A_{bw}$ be the diagonal part of $\mathcal A$ (corresponding to black-to-white edges), and $\mathcal A_{wb} = \mathcal A - \mathcal A_{wb}$ be the off-diagonal part (corresponding to white-to-black edges). Then, the weighted adjacency matrix of $\mathcal N$ (as defined above) is
$$
A = \left(\begin{array}{cc}0 & \mathcal A_{bw} \\\mathcal A_{wb} & 0\end{array}\right).
$$
So, 
\begin{equation}\label{ambn}
\det(\Id - A) = \det(\Id - \mathcal A_{bw}\mathcal A_{wb}),
\end{equation}
and the result of Corollary \ref{cor:bpm} can be restated as follows:
\begin{corollary}\label{cor:fin}
Consider a bipartite perfect network $\mathcal N$ on a torus, with a chosen ideal rim and cut. Let $B(\lambda)$ be the boundary path matrix of the corresponding network on a cylinder, and let $ \mathcal A(\lambda, \mu)$ be the bipartite adjacency matrix of the network ${\mathcal N}(\lambda, \mu)$ obtained from $\mathcal N$ by multiplying the weights of edges crossing the cut by $\lambda^{\pm 1}$ (depending on the sign of the intersection) and weights of edges crossing the rim by $\mu$.
Then 
$$
 \det( \Id - \mu B(\lambda))=\frac{\det (\Id -  {\mathcal A_{bw}}(\lambda, \mu) {\mathcal A}_{wb}(\lambda, \mu))}{\det(\Id -  {\mathcal A_{bw}}(\lambda, 0) {\mathcal A}_{wb}(\lambda, 0))},
$$
where $ \mathcal A_{bw}(\lambda, \mu)$ is the diagonal part of  $ \mathcal A(\lambda, \mu)$ (corresponding to black-to-white edges), while  $ \mathcal A_{wb}(\lambda, \mu)$ is the off-diagonal part of  $ \mathcal A(\lambda, \mu)$ (corresponding to white-to-black edges).\end{corollary}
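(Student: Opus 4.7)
The plan is to treat this as a bipartite reformulation of Corollary \ref{cor:bpm} combined with identity \eqref{ambn}. First I would apply Corollary \ref{cor:bpm} directly to $\mathcal N$, which yields
$$
\det(\Id - \mu B(\lambda)) = \frac{\det(\Id - A(\lambda, \mu))}{\det(\Id - A(\lambda, 0))},
$$
where $A(\lambda, \mu)$ is the (ordinary) weighted adjacency matrix of $\mathcal N(\lambda, \mu)$. The only thing to check here is that $\mathcal N(\lambda,\mu)$ is still a network in the sense required by Corollary \ref{cor:bpm}; but this is immediate since rescaling the weights of selected edges by $\lambda^{\pm 1}$ and $\mu$ does not alter the underlying directed graph, so $\mathcal N(\lambda, \mu)$ is embedded in the same torus with the same rim and cut.

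Next, I would observe that $\mathcal N(\lambda,\mu)$ remains a bipartite perfect network: every black vertex still has a unique outgoing edge and every white vertex a unique incoming edge, and the vertex labeling convention (where the unique outgoing edge of the $i$'th black vertex ends at the $i$'th white vertex) carries over verbatim. Consequently the block decomposition of $A(\lambda,\mu)$ into off-diagonal $\mathcal A_{wb}(\lambda,\mu)$ and diagonal $\mathcal A_{bw}(\lambda,\mu)$ blocks is exactly as in the paragraph preceding \eqref{ambn}, and the identity \eqref{ambn} applies verbatim to both $A(\lambda,\mu)$ and $A(\lambda,0)$.

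Substituting \eqref{ambn} into the numerator and denominator gives
$$
\det(\Id - A(\lambda,\mu)) = \det(\Id - \mathcal A_{bw}(\lambda,\mu)\mathcal A_{wb}(\lambda,\mu))
$$
and similarly for $\mu = 0$, which yields the stated formula. I expect no real obstacle here: this corollary is a purely bookkeeping consequence of Corollary \ref{cor:bpm} and the block-matrix determinant identity \eqref{ambn}. The closest thing to a subtle point is checking that the ordering of vertices used to define $\mathcal A_{bw}$ (so that it is diagonal) is compatible between $\mu = 0$ and general $\mu$; but since multiplying weights of selected edges by $\mu$ does not change which edges are black-to-white, the same labeling works throughout, and the diagonal/off-diagonal decomposition depends only on the orientation of edges, not on the scalar weights.
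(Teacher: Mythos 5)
Your proposal is correct and follows exactly the paper's own (one-line) proof: apply Corollary \ref{cor:bpm} and then rewrite numerator and denominator via the identity \eqref{ambn}. The extra checks you include (that reweighting by $\lambda^{\pm1}$ and $\mu$ preserves the bipartite perfect structure and the vertex labeling, so the diagonal/off-diagonal decomposition is unaffected) are sound and merely make explicit what the paper leaves implicit.
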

\begin{proof}
This follows from \eqref{ambn} and  Corollary \ref{cor:bpm}.
\end{proof}
\medskip
\section{Proof of the main result}\label{sec:pmr}
In this section we prove Theorem \ref{thm1}. The strategy of the proof is as follows. We first show that the boundary measurement matrix of a perfect toric network is equal to the boundary path matrix of the same network but with modified weights. This allows us to use the formula provided by Corollary \ref{cor:fin} to express the characteristic polynomial of the boundary measurement matrix. The second step is to relate the right-hand side of that formula to the characteristic polynomial of the dimer model. That is done using Proposition \ref{prop:Kast}. \par
As explained in the introduction, it is sufficient to prove Theorem \ref{thm1} for a bipartite network drawn on a flat torus. Moreover, one can assume that all edges crossing the rim are parallel to each other and oriented from black to white.

\paragraph{The boundary measurement matrix as a path matrix.} The difference between the boundary measurement matrix $M(\lambda)$ and the boundary path matrix $B(\lambda)$ is the presence of signs in the definition of the former. Specifically, the contribution of a path $\gamma$ to the boundary path matrix is $\lambda^{\mathrm{ind}(\gamma)}\mathrm{wt}(\gamma)$, while its contribution to the boundary measurement matrix is  $(-1)^{c(\gamma)}\lambda^{\mathrm{ind}(\gamma)}\mathrm{wt}(\gamma)$. The sign $(-1)^{c(\gamma)}$ is defined in \cite{GSTV} using the following inductive construction. Let $\gamma$ be a path going from a source to a sink. Then:
\begin{enumerate}
\item If $\gamma$ is simple (i.e. does not cross itself),  its sign is $(-1)^{\mathrm{ind}(\gamma)}$.
\item If $\gamma$ can be decomposed into a path $\gamma'$ and a simple cycle, the signs of $\gamma$
and $\gamma'$ are opposite.
\end{enumerate}

Here we use a modified version of this definition. Namely, we assume that:
\begin{enumerate}
\item If $\gamma$ is simple on the universal covering of the cylinder, then its sign is $+1$.
\item If $\gamma$ can be decomposed into a path $\gamma'$ and a contractible simple cycle, then the signs of $\gamma$
and $\gamma'$ are opposite.
\end{enumerate}
The boundary measurement matrices constructed using these two definitions differ by a substitution $\lambda \mapsto -\lambda$. Such a transformation amounts to changing the spin structure and does not affect the result of Theorem \ref{thm1}.
\begin{proposition}\label{bmpm}
The sign of a path going from a source to a sink is equal to the product of turning numbers of all white-to-black edges on that path. 
\end{proposition}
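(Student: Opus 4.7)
The plan is to interpret the product of turning numbers as $\exp(\tfrac{\mathrm i}{2}T(\gamma))$, where $T(\gamma)$ is the total signed polygonal turning along $\gamma$, and then to match this expression against the inductive definition of the sign. The crucial observation is that because each white vertex has a unique incoming edge in the perfect orientation and each black vertex a unique outgoing edge, for every white-to-black edge $e$ appearing on $\gamma$ the predecessor $e_-$ is forced to coincide with the edge of $\gamma$ immediately before $e$, and $e_+$ with the edge immediately after. Hence $\alpha_-(e)$ and $\alpha_+(e)$ are exactly the signed turning angles of $\gamma$ at the white and black endpoints of $e$, and summing over all white-to-black edges of $\gamma$ (with multiplicity, should $\gamma$ revisit a vertex) yields $\prod_{e} \mathrm{turn}(e) = \exp(\tfrac{\mathrm i}{2}T(\gamma))$.

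For the base case, I lift $\gamma$ to the universal cover of the cylinder, an infinite strip. Using the standing assumption that all rim-crossing edges share a common direction $v$, the source and sink half-edges are parallel to $v$, and so the initial and final tangents of $\gamma$ both equal $v$. To conclude $T(\gamma) = 0$, I close $\gamma$ to a simple closed polygonal curve in the plane by an auxiliary arc $\sigma$ routed far outside the strip, chosen so that $\sigma$ leaves $t$ and arrives at $s$ in direction $v$, killing corner contributions at the gluing points. Whitney's theorem applied to $\gamma \cup \sigma$ gives total tangent winding $\pm 2\pi$, while $\sigma$ can be taken as a polygonal arc making four consecutive right-angle turns of the same rotational sense and so contributing turning $\mp 2\pi$. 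Hence $T(\gamma) = 0$ and the product equals $1$, matching the stipulated sign $+1$.

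For the inductive step, suppose $\gamma$ decomposes as $\gamma'_{\mathrm{pre}} * C * \gamma'_{\mathrm{post}}$ with $C$ a contractible simple cycle, and set $\gamma' = \gamma'_{\mathrm{pre}} * \gamma'_{\mathrm{post}}$. The multiset of white-to-black edges of $\gamma$ is the disjoint union of those of $\gamma'$ and those of $C$, so the product over $\gamma$ equals the product over $\gamma'$ times the product over the white-to-black edges of $C$. The same perfect-orientation argument applied to $C$ (which uses the unique incoming edge at each white vertex and the unique outgoing edge at each black vertex it meets) identifies this last factor with $\exp(\tfrac{\mathrm i}{2}T(C))$, where $T(C)$ is the polygonal turning of $C$ as a closed curve. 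Since $C$ is contractible, it lifts to a simple closed polygon bounding a disk in the plane, so $T(C) = \pm 2\pi$ and the factor equals $-1$, matching the inductive rule that $\gamma$ and $\gamma'$ have opposite signs. The induction is well founded because any path with a repeated vertex on the universal cover contains a contractible simple subcycle whose extraction strictly reduces the edge count. The main obstacle is the base case: verifying that a simple polygonal path in the strip with parallel initial and final tangents has zero total turning. The closing-arc argument handles this, but requires care that the closing arc remains simple and disjoint from $\gamma$ (achievable by routing it far enough outside the strip) and that its tangent matches $v$ at both gluing points so that no corner contributions remain.
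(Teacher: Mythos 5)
Your proof is correct and follows essentially the same route as the paper: identify the product of turning numbers along a path with $\exp(\tfrac{\mathrm i}{2}T)$ (using that the perfect orientation forces $e_\pm$ to be the path's neighbours of $e$), close a simple source-to-sink path up to a simple closed polygon whose total turning is $\pm 2\pi$, and peel off contractible simple cycles, each contributing $-1$ by the exterior-angle argument. The only cosmetic difference is the closing arc (the paper adjoins two concrete edges of turning number $\mathrm i$, you use a four-right-angle detour); note merely that such an arc cannot stay entirely ``outside the strip'' --- it must cross the strip somewhere away from the compact lift of $\gamma$ --- and that the sign-matching you worry about at the end is unnecessary, since $\exp(\tfrac{\mathrm i}{2}T(\gamma))=1$ only requires $T(\gamma)\in 4\pi\Z$.
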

 \begin{figure}[b]
 \centering
\begin{tikzpicture}[, scale = 0.9, decoration = {snake,   
                    pre length=3pt,post length=7pt,
                    },]

\node [draw,circle,color=black, fill=white,inner sep=0pt,minimum size=5pt] (A) at (1,3) {};
\draw [-<-]   (A) -- +(-1,0) coordinate (SRC);
\node [draw,circle,color=black, fill=gray,inner sep=0pt,minimum size=2pt, label = source] (SRCN) at (SRC) {};
\path  (A) -- +(2,2) coordinate (Bp);
\node [draw,circle,color=black, fill=black,inner sep=0pt,minimum size=5pt] (B) at (Bp) {};
\path []  (B) -- +(1,0) coordinate (SINK);
\node [draw,circle,color=black, fill=gray,inner sep=0pt,minimum size=2pt, label = sink] (SINKN) at (SINK) {};
\draw [->-]  (B) -- (SINKN);
\draw [->, decorate] (A) -- (B);
\path [->]  (SINKN) -- +(0,-3) coordinate (C);

\node [draw,circle,color=black, fill=white,inner sep=0pt,minimum size=5pt] (A) at (6,0) {};
\draw [-<-]   (A) -- +(-1,0) coordinate (SRC);
\node [draw,circle,color=black, fill=black,inner sep=0pt,minimum size=5pt] (SRCN) at (SRC) {};
\path  (A) -- +(2,2) coordinate (Bp);
\node [draw,circle,color=black, fill=black,inner sep=0pt,minimum size=5pt] (B) at (Bp) {};
\path []  (B) -- +(1,0) coordinate (SINK);
\node [draw,circle,color=black, fill=white,inner sep=0pt,minimum size=5pt] (SINKN) at (SINK) {};
\draw [->-]  (B) -- (SINKN);
\draw [->, decorate] (A) -- (B);
\path []  (SINKN) -- +(0,-3) coordinate (C);
\node [draw,circle,color=black, fill=black,inner sep=0pt,minimum size=5pt] (CN) at (C) {};
\draw [dashed, ->-]  (SINKN) -- (CN);
\path [->]  (SRCN) -- +(0,-1) coordinate (D);
\node [draw,circle,color=black, fill=white,inner sep=0pt,minimum size=5pt] (DN) at (D) {};
\draw [dashed, ->-]  (DN) -- (SRCN);
\draw [dashed, ->-] (CN) -- (DN);

\draw [->] (4,3) -- (5,2);

\end{tikzpicture}
\caption{Closing up a path.}\label{Kas2}
\end{figure}
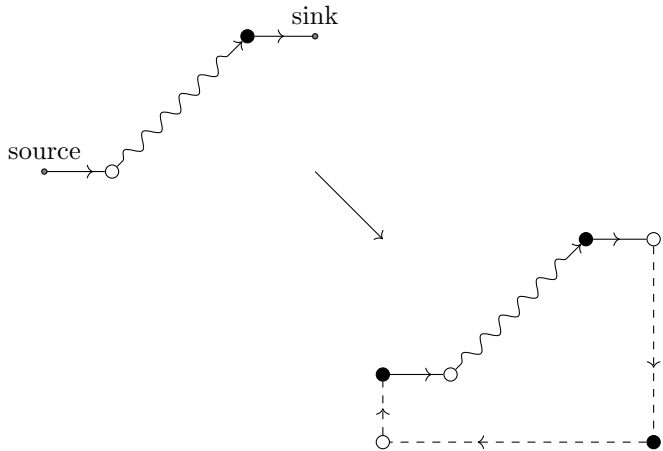
\begin{proof}
It is sufficient to check that the product of turning numbers of all white-to-black edges is equal to $+1$ for source-to-sink paths that are simple on the universal covering, and $-1$ for contractible simple cycles. Given a simple path, one can close it up to a cycle as shown in Figure~\ref{Kas2} (recall that vertices connected to sources are always white, vertices connected to sinks are black, and all edges connected to sources and sinks are parallel to each other). The turning numbers of both newly created white-to-black edges are equal to $\mathrm i$, so the product of turning numbers for the so created cycle is equal to the negative product of turning numbers for the initial path. Therefore, it suffices to show that for a simple contractible cycle the product of turning numbers of white-to-black edges is $-1$. The latter is proved using the same argument as in Example \ref{excycle}:  for a white-to-black edge $e$ in a directed cycle, we have $$\mathrm{turn}(e) = \exp\left(\frac{\mathrm i}{2}(\mathrm{ext}_+(e) + \mathrm{ext}_-(e))\right),$$ where $\mathrm{ext}_\pm(e)$ are exterior angles adjacent to $e$. And since the sum of all exterior angles is $2\pi$, the product of turning numbers is $\exp(\pi \mathrm i) = -1$, as desired. \qedhere

\end{proof}

\begin{corollary}\label{cor:bmm}
We have
$$
 \det( \Id - \mu M(\lambda))=\frac{P(\lambda, \mu)}{P(\lambda, 0)}$$
where
$$P(\lambda, \mu) = \det (\Id -  {\mathcal A_{bw}}(\lambda, \mu) {\mathcal A}^\mathrm{turn}_{wb}(\lambda, \mu)),$$ and  the matrix $\mathcal A^\mathrm{turn}_{wb}(\lambda, \mu)$ is obtained from the matrix  ${\mathcal A}_{wb}(\lambda, \mu)$ by means of multiplying the weight of every edge by its turning number.
\end{corollary}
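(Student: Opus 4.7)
The plan is to combine Proposition \ref{bmpm} with Corollary \ref{cor:fin}. The key observation is that the only difference between the boundary measurement matrix $M(\lambda)$ and the boundary path matrix $B(\lambda)$ is the sign $(-1)^{c(\gamma)}$ attached to each source-to-sink path $\gamma$, and Proposition \ref{bmpm} identifies that sign with the product of turning numbers of the white-to-black edges of $\gamma$.

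First, I would introduce an auxiliary network $\mathcal{N}^{\mathrm{turn}}$ with the same underlying graph as $\mathcal{N}$ but with modified weights: the weight of every white-to-black edge $e$ is replaced by $\mathrm{turn}(e) \cdot \mathrm{wt}(e)$, while black-to-white edges retain their original weights. Since the turning number depends only on the geometry of the edge and its two neighbors in the perfect orientation (which coincide for $\mathcal{N}$ and $\mathcal{N}^{\mathrm{turn}}$), this is well-defined. By Proposition \ref{bmpm}, the contribution of a source-to-sink path $\gamma$ in $\mathcal{N}^{\mathrm{turn}}$ to the boundary path matrix is
\[
\lambda^{\mathrm{ind}(\gamma)} \prod_{e \in \gamma_{wb}} \mathrm{turn}(e) \cdot \mathrm{wt}(\gamma) = (-1)^{c(\gamma)} \lambda^{\mathrm{ind}(\gamma)} \mathrm{wt}(\gamma),
\]
which is precisely the contribution of $\gamma$ to $M(\lambda)$. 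Hence $M(\lambda) = B^{\mathrm{turn}}(\lambda)$, the boundary path matrix of $\mathcal{N}^{\mathrm{turn}}$.

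Next, I would simply apply Corollary \ref{cor:fin} to the bipartite perfect network $\mathcal{N}^{\mathrm{turn}}$ (which is admissible with the same rim and cut as $\mathcal{N}$). The bipartite adjacency matrix of $\mathcal{N}^{\mathrm{turn}}(\lambda, \mu)$ differs from that of $\mathcal{N}(\lambda, \mu)$ only in its off-diagonal part: the diagonal part (from black-to-white edges) is still $\mathcal{A}_{bw}(\lambda, \mu)$, while the off-diagonal part becomes exactly $\mathcal{A}^{\mathrm{turn}}_{wb}(\lambda, \mu)$ as defined in the statement. Substituting these into the formula from Corollary \ref{cor:fin} immediately yields
\[
\det(\Id - \mu M(\lambda)) = \frac{\det(\Id - \mathcal{A}_{bw}(\lambda,\mu) \mathcal{A}^{\mathrm{turn}}_{wb}(\lambda,\mu))}{\det(\Id - \mathcal{A}_{bw}(\lambda,0) \mathcal{A}^{\mathrm{turn}}_{wb}(\lambda,0))} = \frac{P(\lambda,\mu)}{P(\lambda,0)}.
\]

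There is essentially no obstacle here: the corollary is a direct transcription of Corollary \ref{cor:fin} after substituting the path interpretation of the boundary measurement matrix provided by Proposition \ref{bmpm}. The only minor point to verify is that the modification of weights by turning numbers respects the cut and rim decorations, i.e.\ that introducing $\lambda^{\pm 1}$ and $\mu$ factors on edges crossing the cut and rim commutes with multiplying white-to-black edge weights by turning numbers; this is clear since the two operations act on disjoint data (weights versus turning numbers determined by the flat geometry and perfect orientation).
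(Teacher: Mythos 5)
Your proof is correct and follows essentially the same route as the paper: Proposition \ref{bmpm} shows that $M(\lambda)$ is the boundary path matrix of the network with white-to-black edge weights multiplied by their turning numbers, and then Corollary \ref{cor:fin} applied to that modified network gives the formula. Your additional remarks (that the modified network is well-defined and that the turning-number modification commutes with the $\lambda$, $\mu$ decorations) just make explicit what the paper leaves implicit.
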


\begin{proof}
Indeed, in view of Proposition \ref{bmpm} the boundary measurement matrix of the initial network is equal to the boundary path matrix for the network obtained from the initial one by multiplying the weights of all white-to-black edges by their turning numbers, so the desired formula follows from Corollary \ref{cor:fin}.
\end{proof}

\paragraph{Proof of Theorem \ref{thm1}.}
Since the network $\mathcal N$ is bipartite, its associated bipartite graph $\Gamma$ is just $\mathcal N$ itself. So, the isomorphism $\Psi \colon \Hom^1(\mathcal N, \C^*)\to\Hom^1(\Gamma, \C^*)$ is induced by the identity map on $1$-chains. Furthermore, since the black-to-white edges of $\mathcal N$ are oriented from white to black when viewed as edges of the bipartite graph $\Gamma$, the identity map on $1$-chains amounts to the following map between the edge weight spaces:  replace the weights of all black-to-white edges by their reciprocals, while keeping the weights of white-to-black edges intact. We need to show that this map takes the characteristic polynomial $ \det( \Id - \mu M(\lambda))$ of the boundary measurement matrix  to the rational function $K(\lambda, \mu) / K(\lambda, 0)$. To that end it suffices to show that the function 
$P(\lambda, \mu)$ from Corollary~\ref{cor:bmm}, is mapped to the characteristic polynomial $K(\lambda, \mu)$ of $\Gamma$, up to a monomial factor. Pushing forward $P(\lambda, \mu)$ by the map between edge weight spaces, we get the function
$$
\tilde P(\lambda, \mu) = \det (\Id -  {\mathcal A^{-1}_{bw}}(\lambda^{-1}, \mu^{-1}) {\mathcal A}^{\mathrm{turn}}_{wb}(\lambda, \mu)), 
$$
where we used that $\mathcal A_{bw}$ is a diagonal matrix, so inverting the weights is the same as inverting the matrix along with $\lambda$ and $\mu$. Up to a monomial factor, $\tilde P(\lambda, \mu)$ is equal to the determinant of the matrix
$$
S(\lambda, \mu):= {\mathcal A}^{\mathrm{turn}}_{wb}(\lambda, \mu) -  {\mathcal A_{bw}}(\lambda^{-1}, \mu^{-1}).
$$ So, to complete the proof it suffices to show that $S(\lambda, \mu)$ is precisely  the magnetically altered Kasteleyn matrix~\eqref{makm} for the Kasteleyn marking given by Proposition \ref{prop:Kast}. Consider first the off-diagonal part of $S$, i.e. $ {\mathcal A}^{\mathrm{turn}}_{wb}(\lambda, \mu) $. Note that this matrix actually does not depend on $\mu$, because all edges intersecting the rim are black-to-white. So, the $(i,j)$ entry of $S$ for $i \neq j$ is
$$
 S_{ij}(\lambda, \mu) = \sum\nolimits_e \mathrm{turn}(e) \mathrm{wt}(e) \lambda^{\langle e , \gamma_2 \rangle} 
$$
where the sum is taken over all edges going from $i$'th white to $j$'th black vertex, and $\gamma_2$ is the cut. Note that since every edge entering this sum is white-to-black, we have $k(e) = \mathrm{turn}(e) $, and thus $S_{ij}(\lambda, \mu) = \mathcal K_{ij}(\lambda, \mu)$. Consider now the diagonal part of $S$. By construction of the matrix ${\mathcal A_{bw}}(\lambda^{-1}, \mu^{-1})$, we have 
$$
S_{ii}(\lambda, \mu) = - \mathrm{wt}(e_i)  (\lambda^{-1})^{\langle e_i , \gamma_2 \rangle} (\mu^{-1})^{\varepsilon(e_i)}
$$
where $e_i$ is the unique edge from $i$'th black vertex to $i$'th white vertex, and $\varepsilon(e_i) = 1$ if $e_i$ intersects the rim and~$0$ otherwise. Let $\gamma_1$ be the rim. Then, since $\langle \gamma_1, \gamma_2 \rangle = 1$ and the cut goes in the direction from sources to sinks, it follows that  $\langle \gamma_1, e_i \rangle = 1$ for every edge $e_i$ intersecting the rim. So, 
$
\varepsilon(e_i) = {\langle \gamma_1, e_i \rangle}
$
for every black-to-white edge $e_i$. Finally, notice that since $e_i$ is oriented from black-to-white, the corresponding canonically oriented edge of $\Gamma$ is $-e_i$. Rewriting the formula for $S_{ii}$ as
$$
S_{ii}(\lambda, \mu) = - \mathrm{wt}(e_i)  \lambda^{\langle -e_i , \gamma_2 \rangle} \mu^{\langle \gamma_1, -e_i \rangle}.
$$
and taking into account that $k(e_i) = -1$, we see that $S_{ii}(\lambda, \mu) = \mathcal K_{ii}(\lambda, \mu)$. So we indeed have $S(\lambda, \mu) = \mathcal K(\lambda, \mu)$, which completes the proof of Theorem \ref{thm1}.\qed
\medskip

\bibliographystyle{plain}
\bibliography{networks.bib}

\end{document}